\theoremstyle{plain}
\newtheorem{theorem}{Theorem}[section]
\newtheorem{lemma}[theorem]{Lemma}
\newtheorem{corollary}[theorem]{Corollary}
\theoremstyle{definition}
\theoremstyle{remark}
\newtheorem{remark}{Remark}
\begin{document}


\title{Generating Linear, Semidefinite, and Second-order Cone Optimization Problems for Numerical Experiments}

\author{ 
\name{Mohammadhossein Mohammadisiahroudi\textsuperscript{a}\thanks{Corresponding author: M.~M., mom219@Lehigh.edu}, 
Ramin Fakhimi\textsuperscript{a}, 
Brandon Augustino\textsuperscript{a}, and 
Tam\'as  Terlaky\textsuperscript{a}}
\affil{\textsuperscript{a}Industrial and System Engineering Department, Lehigh University, Bethlehem, PA, USA}
}

\maketitle

\begin{abstract}
The numerical performance of algorithms can be studied using test sets or procedures that generate such problems. 
This paper proposes various methods for generating linear, semidefinite, and second-order cone optimization problems.
Specifically, we are interested in problem instances requiring a known optimal solution, a known optimal partition, a specific interior solution, or all these together. 
In the proposed problem generators, different characteristics of optimization problems, including dimension, size, condition number, degeneracy, optimal partition, and sparsity, can be chosen to facilitate comprehensive computational experiments. 
We also develop procedures to generate instances with a maximally complementary optimal solution with predetermined optimal partition to generate challenging semidefinite and second-order cone optimization problems.
Generated instances enable us to evaluate efficient interior-point methods for conic optimization problems. 
\end{abstract}

\begin{keywords}
Problem Generator; Conic Optimization; Linear Optimization; Semidefinite Optimization; Second-order Cone Optimization 
\end{keywords}

\section{Introduction}

Optimization is just one of many fields in which the empirical analysis of algorithms is heavily reliant on the quality of the provided test instances. 
Scholars assess the strengths and weaknesses of algorithms based on these test problems, which must be unbiased, representative, and diverse in their measurable features or characteristics. 
However, many benchmark test problems do not possess these desired qualities, as they are often based on a limited set of real-world problems or have been reused from earlier studies that by now may be obsolete \cite{bowly2019generation}.

An alternative approach is using random test problem generators for experimentation in optimization. 
While their design must be carefully considered, one advantage of simple random generation approaches is their ability to produce problems that possess predictable characteristics. 
As a result, scientists have advocated for using highly parameterized generators to produce appropriately controlled data for experimentation \citep{hooker1994needed}. 
As one of the first attempts in this area, randomly generated feasible polyhedra properties were investigated by Todd~\cite{todd1991probabilistic}. 
Pilcher and Rardin~\cite{pilcher1992partial} proposed a generator for pure integer optimization problems with a known partial polytope by introducing random cuts. 
Yet, this methodology is restricted to traveling salesman problems and does not explicitly consider the solution of relaxation or structural features. 
Lacking the ability to vary features of interest, the scope of these generators for experimentation is limited to specific problem domains.

At times, it can be challenging to develop instance generators in a way that allows properties of interest to be suitably varied. 
While specific characteristics, such as the density of a graph, can usually be directly controlled through the generation process, other attributes can be harder to predefine or control explicitly. 
Many measurable features of the same problem instance can be highly correlated, either due to interacting bounds or simply as a consequence of the random generation process. 
Instances with less-like feature combinations can be attained through an iterative local search, which successively modifies an instance until it possesses the desired properties. 
While these instance-space search techniques are more computationally intensive than parameterized generators, they provide a reliable method for producing instances with specific target characteristics \citep{bowly2019generation}.

The most prevalent search techniques for this application are evolutionary algorithms. 
Chakraborty and Choudhury~\cite{chakraborty2000statistical} and Cotta and Moscato~\cite{cotta2003mixed} applied this approach to perform statistical average- and worst-case analysis of algorithm performance. 
More recently, exploration in this direction has focused on improving the spectrum of instance hardness and diversity of measured features \citep{smith2015generating}.
The success of these techniques in combinatorial optimization opens up questions on the use of similar approaches for linear optimization (LO) and mixed-integer optimization, adopting a more comprehensive range of search algorithms for obtaining difficult-to-design instances, and considering how to best construct the search space for efficient performance.

To develop instance generation techniques for LO test problems with controllable properties, Bowly et al.~\cite{bowly2019generation} presented a comparison of a naive random generator with a highly parameterized generator, showing which feature values can be effectively controlled by each method. 
They also investigated iterative search approaches to find instances that are difficult to design or rarely produced by the generator. 
These approaches allow practitioners to explore areas of interest in the space of linear optimization problems (LOPs), where challenging instances have previously been found. 
This would be impossible using static test sets or n\"aive random generation methods, which provide limited feature control. 
Further, large-scale linear optimization problems are prevalent in economics, industry, logistics, statistics, quantum physics, and other fields. 
As is the case with any real-world application, the aim is to obtain high-quality solutions efficiently, a task for which high-performance computing systems and parallel algorithms are required. 
Thus, the development of new parallel algorithms for generating LOPs and the revision of current algorithms are considered by Sokolinsky and Sokolinskaya~\cite{sokolinsky2021fragenlp}.

Developing new algorithms for solving large-scale LOPs necessitates testing them on benchmark and random problems.
At times, it is sensible to construct linear and integer optimization instance generators specified for special purposes. 
The NETGEN generator \citep{klingman1974netgen} and its successor MNETGEN produce parameterized multicommodity flow, transport, and assignment problems.
The parameters used are thus appropriate to the underlying network, not the feasible set. 
One of the well-known benchmark repositories of LOPs is Netlib-LP \citep{gay1985electronic}. Yet, when debugging LO solvers, generating random LOPs with specific characteristics (such as, e.g., the sparsity, condition number of the coefficient matrix, or a known optimal partition) is often necessary.

Charnes et al.~\cite{charnes1974generation} suggested one of the first methods for generating random LOPs with known solutions. 
This method allows one to generate test problems of arbitrary size with a wide range of numerical characteristics. 
The main idea of the method is as follows; take as a basis a LOP with a known solution, and then randomly modify it so that the solution does not change. 
The key drawback  of this approach is that fixing the optimal solution in advance significantly restricts the random nature of the resulting LOP.

Arthur and Frendewey~\cite{arthur1993gengub} described the GENGUB generator, which constructs random LOPs with a known solution and given characteristics, such as the problem size, the density of the coefficient matrix, the number of binding inequalities, or the degeneracy status. 
A distinctive feature of GENGUB is the ability to introduce generalized upper bound constraints, defined to be a (sub)set of constraints in which each variable appears at most once (i.e., has at most one nonzero coefficient). 
This method has similar drawbacks to the generator found in \cite{charnes1974generation}: by fixing the optimal solution ex ante, the random nature of the resulting LOP is significantly restricted.

Castillo et al.~\cite{castillo2001automatic} suggest a method for generating random LOPs with a preselected solution type: bounded or unbounded, unique or multiple.
Each structure is generated using random vectors with integer components, whose range can be treated as given.
Next, an objective function that satisfies the required conditions, i.e., leads to a solution of the desired type, is obtained. 
This LO problem generator is mainly used for educational purposes rather than testing new LO algorithms.
Okolinsky and Sokolinskaya~\cite{sokolinsky2021fragenlp} proposed the random LOP generator FRaGenLP (Feasible Random Generator of LP), which is implemented as a parallel program for cluster computing systems. 
Calamai et al.~\cite{calamai1993new} described a new technique for generating convex, strictly concave, and indefinite (bilinear or not) quadratic optimization problems.

In the semidefinite optimization literature, scholars were interested in complex problems. 
They pursued various directions for characterizing what constitutes hardness in SDO problems, e.g., not having a strictly complementary solution \citep{pataki2017bad}, or a solution with a nonzero duality gap \citep{sremac2021error}.
Wei and Wolkowicz~\cite{wei2010generating} proposed a procedure to generate SDO problems without a strictly complementary solution. 
We build on these ideas to develop highly parameterized generators. 

\subsection{Contributions}
This paper reviews and proposes several procedures to generate random LOPs, semidefinite optimization problems (SDOPs), and second-order cone optimization problems (SOCOPs) with a specified optimal solution, interior solution, and both of them. We also develop SDOP and SOCOP generators with specific maximally complementary solutions to predetermine the optimal partition. 

Generating SDOPs and SOCOPs with a specific interior solution ensures that Strong Duality holds for the generated problems, and the set of optimal solutions will be bounded. 
Access to predefined interior solutions will enable researchers to analyze the performance of optimization algorithms, such as feasible Interior Point Methods (IPMs), with respect to various initial interior solutions. 

Generating problems with known optimal solutions ensures that the generated problem has a bounded optimum and helps to analyze the algorithm concerning the characteristics of the optimal solution. 
These procedures will serve to further scholars' ability to examine their algorithms by altering different features of input data such as dimension, sparsity, condition number, solution size (which plays an essential role in the performance of Infeasible IPMs), and many others, besides predefined properties of the optimal solution. 
Another possible application of the proposed procedures is the average-case complexity analysis of algorithms. 

The rest of the paper is organized as follows. 
In Section~\ref{sec: LO}, we give a brief review of LO theory before considering several LOP generators that can generate instances with specific optimal solutions, specific interior solutions, or both. 
We then develop similar generators for SDO and SOCO in Sections~\ref{sec: SDO} and~\ref{sec: SOCO}, respectively. 
A discussion on the implementation of the proposed instance generators is provided in Section~\ref{sec: imp}, and Section~\ref{sec: con} concludes the paper.

\section{Linear Optimization}\label{sec: LO}
In this section, we provide a gentle review of Linear Optimization theory before presenting three different algorithms for randomly generating Linear Optimization test problems.

\subsection{Linear Optimization Problems}\label{sec: LOP}
In LOPs, we seek to minimize the inner product of two $n$-dimensional vectors
$$   c^{\top} x = \sum_{i= 1}^n c_i \cdot x_i,$$
for a constant vector $c \in \R{n}$ and variable vector $x \in \R{n}$. 
In this minimization, variable $x$ must satisfy linear constraints of the form 
$$ Ax = b,$$
for a given matrix $A \in \R{m \times n}$ and vector $b \in \R{m}$. Moreover, we require that $x$ be elementwise nonnegative, which we denote by $x \geq 0$. 

We are therefore interested in randomly generating LOPs of the form
\begin{equation}\label{e:LO-P}
    z^P_{LO} = \min_x \left\{c^{\top} x : Ax = b, x \geq 0\right\}, \tag{LOP-P}
\end{equation}
and refer to \eqref{e:LO-P} as the \textit{primal problem}. Given the primal problem \eqref{e:LO-P}, we are also interested in a second problem known as the \textit{dual problem} of \eqref{e:LO-P}, which we write in standard form as follows,
\begin{equation}\label{e:LO-D}
    z^D_{LO} = \max_{(y,s)} \left\{b^{\top} y : A^{\top} y + s = c,  s \geq 0, y \in \R{m} \right\}, \tag{LOP-D}
\end{equation}
where  $s = c - A^{\top} y$ is the dual slack variable. 

We say that $x$ and $(y,s)$ are \textit{feasible solutions} whenever they satisfy the constraints of the primal and dual problems, respectively. The set of primal-dual feasible solutions is thus defined as
\begin{equation*}
\Pcal \Dcal_{LO}=\left\{(x,y,s)\in\mathbb{R}^{n} \times \mathbb{R}^{m} \times \mathbb{R}^{n}: Ax=b, A^{\top} y+s=c,  (x,s)\geq0\right\}.
\end{equation*}
Similarly, the set of all \textit{feasible interior solutions} is given by
\begin{equation*}
\Pcal \Dcal_{LO}^0=\left\{(x,y,s)\in\Pcal \Dcal_{LO} : (x,s)>0\right\}.
\end{equation*}
A crucial property of linear optimization is \textit{weak duality}; any $(y,s)$ that is feasible for \eqref{e:LO-D}, provides a lower bound $b^{\top} y$ on the value of $c^{\top} x$ for any $x$ feasible for \eqref{e:LO-P}, i.e.: 
$$b^{\top} y \leq c^{\top} x,$$
for any $(x, y, s) \in \Pcal \Dcal_{LO}$. Conversely, any $x$ that is feasible for \eqref{e:LO-P} provides an upper bound $c^{\top} x$ on $b^{\top} y$ for any $y$ that is feasible for \eqref{e:LO-D}, and we refer to the nonnegative quantity $c^{\top} x - b^{\top} y=x^{\top}s $ as the \textit{duality gap}. 

Whenever $(x, y, s) \in \Pcal \Dcal$ with $c^{\top} x = b^{\top} y$, or equivalently $x^{\top}s=0$, then $x$ is optimal for \eqref{e:LO-P} and $(y,s)$ is optimal for \eqref{e:LO-D}. In this case, \textit{strong duality} holds for LOPs, i.e., if both the primal and dual problems have feasible solutions, then both have optimal solution with equal objective value. Under strong duality, all optimal solutions, if there exist any, belong to the
set $\Pcal \Dcal_{LO}^*$, defined as
\begin{equation*}
\Pcal \Dcal_{LO}^*=\left\{(x,y,s)\in\Pcal \Dcal_{LO}  :  x^{\top} s=0\right\}.
\end{equation*}
Let $[n]$ denote the set $\left\{1,2,\dots,n\right\}$. Following Roos et al.~\cite{roos1997theory}, LOPs admit an optimal partition $\Ncal\cup \Bcal=[n]$, and $\Bcal\cap \Ncal = \emptyset$, where 
\begin{align*}
    \Bcal&=\{i: \exists (x^*,y^*,s^*)\in \Pcal \Dcal_{LO}^* \text{ with } x^*_i>0\},\\
    \Ncal&=\{i:\exists (x^*,y^*,s^*)\in \Pcal \Dcal_{LO}^* \text{ with } s^*_i>0\}.
\end{align*}
If $(x^*,y^*,s^*)\in \Pcal \Dcal_{LO}^*$ with $x^*_i>0$ for all $i\in \Bcal$, and $s^*_i>0$ for all $i\in \Ncal$, then we have $x^*+s^*>0$ and the optimal solution pair $(x^*,y^*,s^*)$ is called strictly complementary. In this section, we use $(\Bcal,\Ncal)$ to denote the optimal partition, and $(B,N)$ the index set partition in the algorithms. After presenting each algorithm, we clarify when the predefined partition $(B,N)$ is equal to the optimal partition $(\Bcal,\Ncal)$.

\subsection{Instance Generators for LOPs}\label{sec: LOPGen}
In the rest of this section, we review three main generators which produce LO instances given either a predefined (or randomly chosen) interior solution, a predefined (or randomly chosen) optimal solution (maybe strictly complementary or not), or both. Each LOP generator allows the user to control the characteristics of parameters $(A,b,c)$, including but not limited to their condition number, sparsity, and norm. Further, users can alter the optimal solution's features to examine their algorithm's performance.

In the following algorithms, the term ``generate" should be interpreted freely. It may refer to generating the respective data randomly, or the connotation could be that the data is constructed with some specific purpose, e.g., to obtain matrices with some specific structure such as sparsity or conditioning.

\subsubsection{LOPs with a Predefined Interior Solution}\label{sec: LOPINT}

To study the performance of IPMs applied to LOPs, it is often helpful to have instances with specific interior solutions, and a common approach to generating LOPs with a desired interior solution is presented as Algorithm~\ref{alg: LO-IP}.

\begin{algorithm}[H] 
\caption{Generating a LOP with a specific interior solution} \label{alg: LO-IP}
\begin{algorithmic}[1]
\State Choose dimensions $m<n$
\State Choose or generate $(x^0,s^0)$  such that $x_i^0>0$ and $s_i^0>0$ for all $i\in [n]$
\State Generate $A \in \mathbb{R}^{m\times n}$ 
\State Generate $y^0 \in \mathbb{R}^{m}$ 
\State Calculate $b=Ax^0$ and $c=A^{\top} y^0+s^0$
\State \textbf{Return }{LOP $(A, b, c)$ with interior solution
$(x^0,y^0,s^0)$}
\end{algorithmic}
\end{algorithm}

\begin{remark}
Suppose we want the interior solution $(x^0,s^0)$ to have a duality gap of $x^{0^{\top}} s^0 = n \mu$ for some scalar $\mu>0$. Then, in Step 1 of Algorithm~\ref{alg: LO-IP}, we generate $x_i^0>0$ and calculate $s_i^0=\frac{\mu}{x_i^0}$ for $i\in [n]$.
\end{remark}

The above remark makes an observation relevant to IPMs, as in the context of IPMs, the constant $\mu$, referred to as the central path parameter, plays a crucial role. IPMs begin with some initial interior solution $(x^0,s^0) \in \Pcal \Dcal^0$ with 
$$\frac{x^{0^{\top}} s^0}{n} =\mu^0 > 0,$$
and subsequently, reduce $\mu$ in each iteration as the algorithm progresses toward a solution to the LOP with desired complementarity gap. In line with our discussion on LO duality, it is easy to see that when $\mu \to 0$, we approach an optimal solution to the primal-dual pair \eqref{e:LO-P}-\eqref{e:LO-D}. 

\begin{remark}
Algorithm~\ref{alg: LO-IP} facilitates the generation of a coefficient matrix $A$ with any desired properties, e.g., sparsity, structure, or being ill-conditioned.
\end{remark}

\begin{remark}
Several conditions are needed to generate a full row rank coefficient matrix $A$ with probability one randomly  \citep[see e.g., ][]{coja2021full}.
\end{remark}

\subsubsection{LOPs with a Predefined Optimal Solution}\label{sec: LOPOPT}

A prevailing approach for generating LOPs with a known optimal solution is described in Algorithm~\ref{alg: LO-OP}.

\begin{algorithm}[H] 
\caption{Generating a LOP with a specific optimal solution} \label{alg: LO-OP}
\begin{algorithmic}[1]
\State Choose dimensions $m<n$
\State Partition the index set $[n]$ to $B$ and $N$ with $B\cap N=\emptyset$ and $B\cup N=[n]$
\State Generate $x^*$ such that $x_i^*>0$  for $i\in B$ and $x_i^*=0$ for $i\in N$
\State Generate $s^*$ such that $s_i^*>0$  for $i\in N$ and $s_i^*=0$ for $i\in B$
\State Generate $A \in \mathbb{R}^{m\times n}$ 
\State Generate $y^* \in \mathbb{R}^{m}$  
\State Calculate $b=Ax^*$ and $c=A^{\top} y^*+s^*$
\State \textbf{Return }{LOP $(A, b, c)$ with optimal solution
$(x^*,y^*,s^*)$}
\end{algorithmic}
\end{algorithm}

\begin{remark}
Since the generated optimal solution $(x^*,y^*,s^*)$ by Algorithm~\ref{alg: LO-OP} is strictly complementary, the optimal partition $(\Bcal, \Ncal)$ is equal to $(B,N)$.
\end{remark}

\begin{remark}
Partition $(B, N)$ may be generated randomly or to satisfy some desired properties, such as primal or dual degeneracy, or both, or having a unique optimal basis solution. 
\end{remark}

\begin{remark}
Let $A= [A_B \ A_N]$. If $|B|=m$ and $A_B$ is nonsingular, then $x^*$ and $s^*$ yield the unique optimal basis solution. 
\end{remark}
\begin{remark}
If we modify Algorithm~\ref{alg: LO-OP} by generating $x^*$ such that $x_i^*\geq0$  for $i\in B$ and $x_i^*=0$ for $i\in N$, and $s^*$  such that $s_i^*\geq0$  for $i\in N$ and $s_i^*=0$ for $i\in B$, then $B$ and $N$ do not necessarily give the optimal partition. While $x^*$ and $s^*$ are complementary solutions, they are not necessarily strictly complementary. 
\end{remark}

\subsubsection{LOPs with Predefined Optimal and Interior Solutions}\label{sec: LOPINTOPT}
Charnes et al.~\cite{charnes1974generation} discuss procedures to generate problems with a specific optimal \textit{or} interior solution. Here, we develop a novel procedure to generate a LOP with a specific optimal solution $(x^*,y^*,s^*)$ \textit{and} a specific interior solution $(x^0,y^0,s^0)$,  as presented in Algorithm~\ref{alg: LO-OP-IP}. The general idea is first to use Algorithm~\ref{alg: LO-OP} to generate a problem with optimal solution $(x^*,y^*,s^*)$ before extending the problem by adding a variable and a constraint to make the interior point $(x^0,y^0,s^0)$ feasible for the new problem. Using this scheme, we can produce LOPs for any general predefined optimal and interior solutions, where the only additional condition is
\begin{equation}\label{con1}
    (x^0-x^*)^{\top} (s^0-s^*)=0.
\end{equation}

The condition stipulated by equation~\eqref{con1} is a natural property; for any feasible solution pairs, it follows that $(x^*-x^0)\in \operatorname{Lin}^{\perp}(A)$ and $(s^*-s^0)\in \operatorname{Lin}(A)$, where $\operatorname{Lin}(A)$ denotes the lineality space of $A$. In other words, the difference of the predefined solutions $x^0-x^*$ and $s^0-s^*$ \textit{must} be orthogonal, and steps \ref{alg3st8} and \ref{alg3st9} of Algorithm~\ref{alg: LO-OP-IP} ensure this property holds.

\begin{algorithm}[H] 
\caption{Generating LOP with specific optimal and interior solutions} \label{alg: LO-OP-IP}
\begin{algorithmic}[1]
\State Choose $m<n$ (The generated LOP has $m+1$ constraints and $n+1$ variables.)
\State Generate LOP $(\Ahat, \bhat, \chat)$ with optimal solution
$(\xhat,\yhat,\shat)$ and partition $(B,N)$ using Algorithm \ref{alg: LO-OP}
\State Generate $x^0,s^0\in \mathbb{R}^n$ such $x^0,s^0>0$ 
\State Let $\delta=(x_B^0-\xhat_B)^{\top} s_B^0+(s_N^0-\shat_N)^{\top} x_N^0$, generate $x^0_{n+1}>0$ and $s^0_{n+1}>(\frac{-\delta}{x^0_{n+1}})^+$ \label{alg3st8} 

\State Calculate $\hat{s}_{n+1}=\frac{\delta}{x^0_{n+1}}+s^0_{n+1}$ and let $\hat{x}_{n+1}=0$ \label{alg3st9}

\State Build $x^*=\begin{pmatrix}\hat{x}_B\\0\\0\end{pmatrix}$ and $x^0=\begin{pmatrix}x^0_B\\x^0_N\\x^0_{n+1}\end{pmatrix}$ 

\State Build $s^*=\begin{pmatrix}0\\\hat{s}_N\\\shat_{n+1}\end{pmatrix}$ and $s^0=\begin{pmatrix}s^0_B\\s_N^0\\s^0_{n+1}\end{pmatrix}$

\State Generate $y^0=\begin{pmatrix}y^0_{1:m}\\y^0_{m+1}\end{pmatrix}\in \mathbb{R}^{m+1}$ randomly such that $y^0_{m+1}\not=0$ 

\State Build $y^*=\begin{pmatrix}\hat{y}\\0\end{pmatrix}$ 

\State Calculate 
\begin{align*}
    \hat{a}_{n+1}&=\frac{1}{x^0_{n+1}}(\hat{A}_B(\hat{x}_B-x^0_B)-\hat{A}_Nx^0_N)\\
    d_B&=\frac{1}{y^0_{m+1}}(\Ahat_B^{\top} (\hat{y}-y^0_{1:m})-s^0_B)\\
    d_N&=\frac{1}{y^0_{m+1}}(\Ahat_N^{\top} (\hat{y}-y^0_{1:m})+s^*_N-s^0_N)\\
    d_{n+1}&=\frac{1}{x^0_{n+1}}(d_B^{\top} (\hat{x}_B-x_B^0)-d_N^{\top} x_N^0)
\end{align*} 
\State Build $A_{(m+1)\times(n+1)}=\begin{pmatrix}\hat{A}_B&\hat{A}_N&\hat{a}_{n+1}\\ d_B^{\top} &d_N^{\top} &d_{n+1}\end{pmatrix}$
\State Calculate $b=\begin{pmatrix}\hat{b}\\
d_B^{\top} \hat{x}_B\end{pmatrix}$ and $c=\begin{pmatrix}\hat{c}\\\hat{a}_{n+1}^{\top} \hat{y}+s^*_{n+1}\end{pmatrix}$
\State \textbf{Return }{LOP $(A, b, c)$ with optimal solution
$(x^*,y^*,s^*)$ and interior solution $(x^0,y^0,s^0)$}
\end{algorithmic}
\end{algorithm}

Theorem \ref{t1} asserts that the claimed properties of $(x^0,y^0,s^0)$ and  $(x^*,y^*,s^*)$ are indeed correct. Before presenting and proving Theorem~\ref{t1}, we need to verify the orthogonality properties of the generated solution.
\begin{lemma}\label{lem: orthogonality of LO}
For $(x^0,y^0,s^0)$ and  $(x^*,y^*,s^*)$ generated by Algorithm \ref{alg: LO-OP-IP}, then we have
$$(x^0-x^*)^{\top} (s^0-s^*)=0.$$
\end{lemma}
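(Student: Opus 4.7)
The plan is to verify the orthogonality identity by direct expansion, exploiting the block structure imposed by the partition $(B, N, \{n{+}1\})$ and the definitions in Steps~\ref{alg3st8}--\ref{alg3st9} of Algorithm~\ref{alg: LO-OP-IP}. Nothing subtle should be required: the scalar $\delta$ and the formula for $\hat{s}_{n+1}$ are engineered precisely so that the telescoping works out.

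First I would write out the differences componentwise. Using $x^*=(\hat{x}_B,\,0,\,0)^{\top}$ and $s^*=(0,\,\hat{s}_N,\,\hat{s}_{n+1})^{\top}$ together with the corresponding blocks of $x^0$ and $s^0$, the vector $x^0 - x^*$ has blocks $(x^0_B - \hat{x}_B,\; x^0_N,\; x^0_{n+1})$, and $s^0 - s^*$ has blocks $(s^0_B,\; s^0_N - \hat{s}_N,\; s^0_{n+1} - \hat{s}_{n+1})$. Taking the inner product and splitting along the partition gives three contributions:
\begin{equation*}
(x^0-x^*)^{\top}(s^0-s^*) = (x^0_B - \hat{x}_B)^{\top} s^0_B \;+\; (x^0_N)^{\top}(s^0_N - \hat{s}_N) \;+\; x^0_{n+1}\bigl(s^0_{n+1} - \hat{s}_{n+1}\bigr).
\end{equation*}

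Next I would recognize the first two summands. By the definition of $\delta$ in Step~\ref{alg3st8}, namely $\delta = (x_B^0-\hat{x}_B)^{\top} s_B^0+(s_N^0-\hat{s}_N)^{\top} x_N^0$, those two summands together equal exactly $\delta$. Then I would invoke Step~\ref{alg3st9}, where $\hat{s}_{n+1}=\frac{\delta}{x^0_{n+1}}+s^0_{n+1}$, to rewrite the third summand as $x^0_{n+1}\cdot\bigl(-\delta/x^0_{n+1}\bigr) = -\delta$. Summing yields $\delta-\delta=0$, establishing the claim.

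There is no real obstacle here; the only thing to be careful about is keeping the block decomposition consistent (in particular that $\hat{x}_N=0$ and the $N$-block of $s^*$ equals $\hat{s}_N$, not $s^0_N$, so that Step~\ref{alg3st8}'s formula for $\delta$ exactly matches the two orthogonality contributions arising in the expansion). Positivity of $x^0_{n+1}$ (guaranteed in Step~\ref{alg3st8}) is needed only to ensure that the division in the definition of $\hat{s}_{n+1}$ is well-defined.
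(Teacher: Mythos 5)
Your proof is correct and follows essentially the same route as the paper's: expand $(x^0-x^*)^{\top}(s^0-s^*)$ blockwise over $(B,N,\{n{+}1\})$, identify the $B$- and $N$-contributions with $\delta$ from Step~\ref{alg3st8}, and cancel against the $-\delta$ produced by the definition of $\hat{s}_{n+1}$ in Step~\ref{alg3st9}. If anything, your grouping of terms is cleaner than the paper's expansion, which contains typographical slips in the intermediate lines.
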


\begin{proof}
By construction, we have 
\begin{align*}
    (x^0-x^*)^{\top} (s^0-s^*)=&(x_B^0)^{\top}s_B^0+(x_B^*)^{\top}s_B^*-(x_B^0)^{\top}s_B^*-(x_B^*)^{\top}s_B^0(x_N^0)^{\top}s_N^0 \\
    &+(x_N^*)^{\top}s_N^*-(x_N^0)^{\top}s_N^*-(x_N^*)^{\top}s_N^0 (x_{n+1}^0)^{\top}s_{n+1}^0\\
    &+(x_{n+1}^*)^{\top}s_N^*-(x_{n+1}^0)^{\top}s_{n+1}^*-(x_{n+1}^*)^{\top}s_{n+1}^0\\
    =&-\delta+(x_{n+1}^0)^{\top}s_{n+1}^0-(x_{n+1}^0)^{\top}s_{n+1}^*=0.
\end{align*}
The proof is complete.
\end{proof}
Using Lemma~\ref{lem: orthogonality of LO}, the following theorem shows that the generated problem satisfies the desired properties.
\begin{theorem}\label{t1}
Let $(x^0,y^0,s^0)$ and  $(x^*,y^*,s^*)$ be generated by Algorithm~\ref{alg: LO-OP-IP}. Then, 
\begin{subequations}
\begin{align}
    x^*\geq0,~s^*\geq0,~x^0>0,~s^0&>0,\label{t1f}\\
    (x^*)^{\top} s^*&=0,\label{t1a}\\
    Ax^*&=b,\label{t1b}\\
    A^{\top} y^*+s^*&=c,\label{t1c}\\
    Ax^0&=b,\label{t1d}\\
    A^{\top} y^0+s^0&=c.\label{t1e}
\end{align}
\end{subequations}
That is, $(x^0,y^0,s^0)$ and $(x^*,y^*,s^*)$ and are, respectively, interior and optimal solutions of the generated LOP $(A,b,c)$.
\end{theorem}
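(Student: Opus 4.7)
The plan is to verify the eight equalities and inequalities in \eqref{t1f}--\eqref{t1e} by direct substitution, using the definitions of $\hat a_{n+1}$, $d_B$, $d_N$, $d_{n+1}$, and $\hat s_{n+1}$ from Algorithm~\ref{alg: LO-OP-IP}, together with the fact that $(\hat x,\hat y,\hat s)$ solves the smaller LOP $(\hat A,\hat b,\hat c)$ produced by Algorithm~\ref{alg: LO-OP} with $\hat x_N=0$ and $\hat s_B=0$.

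First I would dispatch the sign conditions \eqref{t1f} and the complementarity identity \eqref{t1a}. Positivity on the original $B,N$ blocks is built into step~3, $x^0_{n+1}>0$ is chosen in step~\ref{alg3st8}, and the condition $s^0_{n+1}>(-\delta/x^0_{n+1})^+$ forces $\hat s_{n+1}=\delta/x^0_{n+1}+s^0_{n+1}>0$, giving $s^*\geq 0$. Complementarity $(x^*)^\top s^*=0$ is immediate, since $x^*$ is supported on $B$ while $s^*$ is supported on $N\cup\{n+1\}$, and these index sets are disjoint.

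Next I would handle primal and dual feasibility of the optimal pair. For \eqref{t1b}, the first $m$ rows of $Ax^*$ collapse to $\hat A_B\hat x_B=\hat b$ because $x^*_N=0$ and $x^*_{n+1}=0$, and the last row equals $d_B^\top\hat x_B$ by construction of $b$. For \eqref{t1c}, since $y^*_{m+1}=0$, the column $(d_B^\top, d_N^\top, d_{n+1})^\top$ of $A^\top$ drops out, so $A^\top y^*+s^*$ reduces to $\hat A^\top\hat y+\hat s=\hat c$ in the first $n$ coordinates, and to $\hat a_{n+1}^\top\hat y+s^*_{n+1}$ in the last coordinate, which matches the defining formula for $c_{n+1}$. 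For the interior primal feasibility \eqref{t1d}, the defining formula for $\hat a_{n+1}$ is arranged precisely so that $\hat A_B x^0_B+\hat A_N x^0_N+\hat a_{n+1}x^0_{n+1}=\hat A_B\hat x_B=\hat b$ in the first $m$ rows, and the analogous identity for $d_{n+1}$ yields $d_B^\top x^0_B+d_N^\top x^0_N+d_{n+1}x^0_{n+1}=d_B^\top\hat x_B$ in the last row.

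The main obstacle will be \eqref{t1e}, dual feasibility of the interior pair $(y^0,s^0)$. In the first $n$ coordinates, the definitions of $d_B$ and $d_N$ produce telescoping cancellations that recover $\hat A_B^\top\hat y=\hat c_B$ and $\hat A_N^\top\hat y+\hat s_N=\hat c_N$. The subtle step is the last coordinate: one must show $\hat a_{n+1}^\top y^0_{1:m}+d_{n+1}y^0_{m+1}+s^0_{n+1}=\hat a_{n+1}^\top\hat y+\hat s_{n+1}$, equivalently
\[
d_{n+1}y^0_{m+1}=\hat a_{n+1}^\top(\hat y-y^0_{1:m})+\frac{\delta}{x^0_{n+1}}.
\]
I would obtain this by substituting the formulas for $d_B$ and $d_N$ into the definition of $d_{n+1}$, using the identity $\hat A_B(\hat x_B-x^0_B)-\hat A_N x^0_N=\hat a_{n+1}x^0_{n+1}$ to eliminate the $\hat A$-terms, and recognizing what remains as $\delta/x^0_{n+1}$ via the very definition of $\delta$. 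This is essentially the content of the orthogonality identity of Lemma~\ref{lem: orthogonality of LO} cast on the dual side, and is the only step where the precise choice of $\hat s_{n+1}$ is used. Everything else is bookkeeping.
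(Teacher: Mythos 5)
Your proposal is correct and follows essentially the same route as the paper: direct verification of each condition, with the only delicate point being the last coordinate of dual feasibility for the interior solution, which you resolve by the same substitution of $d_B$, $d_N$ into $d_{n+1}$ and recognition of $\delta$ (equivalently, the orthogonality identity of Lemma~\ref{lem: orthogonality of LO}) that the paper uses. Your reduction of that step to the identity $d_{n+1}y^0_{m+1}=\hat a_{n+1}^{\top}(\hat y-y^0_{1:m})+\delta/x^0_{n+1}$ is a slightly cleaner packaging of the paper's expansion of $\alpha$, but not a different argument.
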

\begin{proof}
Observe that \eqref{t1f} holds by construction.
Equality~\eqref{t1a} refers to compelmentarity of $(x^*,y^*,s^*)$, which holds due to the fact that
$${x^*}^{\top} s^*=\hat{x}_B^{\top} 0+0^{\top}\hat{s}_N+0s^*_{n+1}=0.$$
To see that equation \eqref{t1b} holds, i.e., the optimal solution satisfies primal feasibility, observe that 
$$Ax^*=\begin{pmatrix}\hat{A}_B&\hat{A}_N&\hat{a}_{n+1}\\ d_B^{\top} &d_N^{\top} &d_{n+1}\end{pmatrix}\begin{pmatrix}\hat{x}_B\\0\\0\end{pmatrix}=\begin{pmatrix}\hat{A}_B\hat{x}_B\\d_B^{\top} \hat{x}_B\end{pmatrix}=\begin{pmatrix}\hat{b}\\d_B^{\top} \hat{x}_B\end{pmatrix}=b.$$
Similarly, dual feasibility is satisfied by the optimal solution, since
$$A^{\top} y^*+s^*=\begin{pmatrix}\hat{A}_B^{\top} &d_B\\ \hat{A}_N^{\top} &d_N\\\hat{a}_{n+1}^{\top} &d_{n+1}\end{pmatrix}\begin{pmatrix}\hat{y}\\0\end{pmatrix}+\begin{pmatrix}0\\\hat{s}_N\\s^*_{n+1}\end{pmatrix}=\begin{pmatrix}\hat{c}\\\hat{a}_{n+1}^{\top} \hat{y}+s^*_{n+1}\end{pmatrix}=c.$$
That is, equation \eqref{t1c} holds. 

The interior solution $(x^0,y^0,s^0)$ is primal feasible since
\begin{align*}
Ax^0&=\begin{pmatrix}\hat{A}_B&\hat{A}_N&\hat{a}_{n+1}\\ d_B^{\top} &d_N^{\top} &d_{n+1}\end{pmatrix}\begin{pmatrix}x^0_B\\x^0_N\\x^0_{n+1}\end{pmatrix}\\
    &=\begin{pmatrix}\hat{A}_Bx^0_B+\hat{A}_Nx^0_N+\hat{a}_{n+1}x^0_{n+1}\\ d_B^{\top} x^0_B+d_N^{\top} x^0_N+d_{n+1}x^0_{n+1}\end{pmatrix}\\
    &=\begin{pmatrix}\hat{A}_Bx^0_B+\hat{A}_Nx^0_N+(\hat{A}_B(\hat{x}_B-x^0_B)-\hat{A}_Nx^0_N)\\ d_B^{\top} x^0_B+d_N^{\top} x^0_N+(d_B^{\top}(\hat{x}_B-x_B^0)-d_N^{\top} x_N^0)\end{pmatrix}\\
    &=\begin{pmatrix}\hat{A}_B\hat{x}_B\\d_B^{\top} \hat{x}_B\end{pmatrix}=\begin{pmatrix}\hat{b}\\d_B^{\top} \hat{x}_B\end{pmatrix}=b,
\end{align*}
which proves \eqref{t1d}. We can also certify the dual feasibility of the interior solution: 
\begin{align*}
    A^{\top} y^0+s^0&=\begin{pmatrix}\hat{A}_B^{\top} &d_B\\ \hat{A}_N^{\top} &d_N\\\hat{a}_{n+1}^{\top} &d_{n+1}\end{pmatrix}\begin{pmatrix}y^0_{1:m}\\y^0_{m+1}\end{pmatrix}+\begin{pmatrix}s^0_B\\s_N^0\\s^0_{n+1}\end{pmatrix}=\begin{pmatrix}\hat{A}_B^{\top} y^0_{1:m}+d_By^0_{m+1}+s^0_B\\\hat{A}_n^{\top} y^0_{1:m}+d_Ny^0_{m+1}+s_N^0\\\hat{a}_{n+1}^{\top} y^0_{1:m}+d_{n+1}y^0_{m+1}+s^0_{n+1}\end{pmatrix}\\
    &=\begin{pmatrix}\hat{A}_B^{\top} y^0_{1:m}+(\Ahat_B^{\top}(\hat{y}-y^0_{1:m})-s^0_B)+s^0_B\\\hat{A}_n^{\top} y^0_{1:m}+(\Ahat_N^{\top}(\hat{y}-y^0_{1:m})+s^*_N-s^0_N)+s_N^0\\\alpha\end{pmatrix}=\begin{pmatrix}\Ahat_B^{\top}\hat{y}\\\Ahat_N^{\top}\hat{y}+s^*_N\\\alpha\end{pmatrix}\\
    &=\begin{pmatrix}\hat{c}\\\hat{a}_{n+1}^{\top}\hat{y}+s^*_{n+1}\end{pmatrix}=c,
\end{align*}
where $\alpha= \hat{a}_{n+1}^{\top} y^0_{1:m}+d_{n+1}y^0_{m+1}+s^0_{n+1}$. 

Finally, to prove that equation \eqref{t1e} holds as well, we still need to show that $\alpha=\hat{a}_{n+1}^{\top}\hat{y}+s^*_{n+1}$. By straightforward calculation, we have

\noindent \resizebox{1\linewidth}{!}{
	\begin{minipage}{\linewidth}
 \begin{align*}
   \alpha&=\frac{1}{x^0_{n+1}}\big(\hat{A}_B(\hat{x}_B-x^0_B)-\hat{A}_Nx^0_N\big)^{\top}y^0_{1:m}+\frac{y^0_{m+1}}{x^0_{n+1}}\big(d_B^{\top}(\hat{x}_B-x_B^0)-d_N^{\top}x_N^0\big)+s^0_{n+1} \\
   &=\frac{1}{x^0_{n+1}}\Big(\big(\hat{A}_B(\hat{x}_B-x^0_B)-\hat{A}_Nx^0_N\big)^{\top}y^0_{1:m}+\big(\Ahat_B^{\top}(\hat{y}-y^0_{1:m})-s^0_B\big)^{\top}(\hat{x}_B-x_B^0)-\big(\Ahat_N^{\top}(\hat{y}-y^0_{1:m})+s^*_N-s^0_N\big)^{\top}x_N^0\Big)+s^0_{n+1} \\
   &=\frac{1}{x^0_{n+1}}\Big({y^0_{1:m}}^{\top}\hat{A}_B\hat{x}_B-{y^0_{1:m}}^{\top}\hat{A}_Bx^0_B-{y^0_{1:m}}^{\top}\hat{A}_Nx^0_N+\hat{x}_B^{\top}\Ahat_B^{\top}\hat{y}-\hat{x}_B^{\top}\Ahat_B^{\top}y^0_{1:m}-\hat{x}_B^{\top}s^0_B-{x_B^0}^{\top}\Ahat_B^{\top}\hat{y}+{x_B^0}^{\top}\Ahat_B^{\top}y^0_{1:m}+{x_B^0}^{\top}s^0_B\\
   &\quad-{x_N^0}^{\top}\Ahat_N^{\top}\hat{y}+{x_N^0}^{\top}\Ahat_N^{\top}y^0_{1:m}-{x_N^0}^{\top}s^*_N+{x_N^0}^{\top}s^0_N\Big)+s^0_{n+1} \\
   &=\frac{1}{x^0_{n+1}}\Big(\hat{x}_B^{\top}\Ahat_B^{\top}\hat{y}-{x_B^0}^{\top}\Ahat_B^{\top}\hat{y}-{x_N^0}^{\top}\Ahat_N^{\top}\hat{y}-\hat{x}_B^{\top}s^0_B+{x_B^0}^{\top}s^0_B-{x_N^0}^{\top}s^*_N+{x_N^0}^{\top}s^0_N\Big)+s^0_{n+1} \\
   &=\frac{(\hat{x}_B^{\top}\Ahat_B^{\top}-{x_B^0}^{\top}\Ahat_B^{\top}-{x_N^0}^{\top}\Ahat_N^{\top})}{x^0_{n+1}}\hat{y}+\frac{-\hat{x}_B^{\top}s^0_B+{x_B^0}^{\top}s^0_B-{x_N^0}^{\top}s^*_N+{x_N^0}^{\top}s^0_N+s^0_{n+1}x^0_{n+1}}{x^0_{n+1}} \\
   &=\hat{a}_{n+1}^{\top}\hat{y}+\frac{(x^0-x^*)^{\top}(s^0-s^*)+s^*_{n+1}x^0_{n+1}}{x^0_{n+1}}=\hat{a}_{n+1}^{\top}\hat{y}+s^*_{n+1}.
\end{align*}
	\end{minipage}
\newline}

The proof is complete.
\end{proof}
\begin{remark}
Since the generated optimal solution $(x^*,y^*,s^*)$ by Algorithm~\ref{alg: LO-OP-IP} is strictly complementary, the optimal partition $(\Bcal, \Ncal)$ is equal to $(B,N)$.
If we modify Algorithm~\ref{alg: LO-OP-IP} such that  $x_i^*\geq0$  for $i\in B$ and  $s_i^*\geq0$  for $i\in N$, then $B$ and $N$ do not necessarily give the optimal partition. While $x^*$ and $s^*$ are complementary solutions, they are not necessarily strictly complementary.
\end{remark}

\begin{remark}
We can simplify Algorithm \ref{alg: LO-OP-IP} by setting
$$x_B^0=\hat{x}_B,  s_N^0=\hat{s}_N, s^*_{n+1}=s^0_{n+1},\begin{text} and  \end{text}~ y^0_{1:m}=\hat{y}.$$ 
It is straightforward to verify that Condition~\ref{con1} is satisfied for these choices. An even simpler case arises if we choose
$$x^0_N=e, s^0_B=e, \text{ and }y^0_{m+1}=x^0_{n+1}=s^0_{n+1}=1.$$
\end{remark}

In the next section, we extend these problem generators to generate SDO problems.

\section{Semidefinte Optimization}\label{sec: SDO}
Now, we turn our attention to SDO. Just as in the previous section, we begin by reviewing the problem setting and important properties before presenting the instance generators for this class of optimization problems. 

\subsection{Semidefinte Optimization Problems}\label{sec: SDOP}
In \textit{semidefinite optimization}, one seeks to minimize the inner product
of two $n \times n$ symmetric matrices:
$$ C \bullet X = \trace{(CX)} = \sum_{i = 1}^n \sum_{j = 1}^n C_{ij} X_{ij},$$
for some symmetric constant matrix $C \in \Scal^n$ and matrix variable $X\in \Scal^n$. Note that $\Scal^n$ denotes the space of $n \times n$ symmetric matrices, and we write $\Scal^n_+$ ($\Scal^n_{++}$) to represent the cone of symmetric positive semidefinite (symmetric positive definite) matrices. 

Similar to the LOP studied in the previous section, variable $X$ must satisfy linear constraints of the form
$$A_i \bullet X = b_i,~~~\forall i \in [m],$$
where $A_1, \dots, A_m \in \Scal^n$ are given symmetric matrices and $b \in \mathbb{R}^{m}$. Given that $C \bullet X$ is a linear function of $X$, stopping here would simply yield a LOP in which the variables are given by the entries of the matrix $X$. Rather, we add a nonlinear (albeit convex) constraint, which stipulates that $X$ must be a positive semidefinite matrix, which we write $X \succeq 0$. More generally, the notation $U \succeq V$ indicates that $U - V$ is symmetric positive semidefinite, and is equivalent to stating $U - V \in \Scal^n_+$. Likewise, when the inequality is strict, i.e., $U \succ V$, it follows that $U - V \in \Scal^n_{++}$, so $U-V$ is symmetric positive definite. From the above discussion, it is straightforward to observe that SDO is a generalization of LO, in which we replace the element-wise nonnegativity constraint $x \geq 0$ found in \eqref{e:LO-P} by a conic inequality with respect to the cone $\Scal^n_+$. 

Accordingly, in this section we are interested in generating problems of the form
\begin{equation}\label{e:SDO1}
     z^P_{SDO} = \inf_X \left\{C \bullet X : A_i \bullet X = b_i,~\forall i \in [m], X \succeq 0 \right\}, \tag{SDOP-P}
\end{equation}
which has an associated dual problem
\begin{equation}\label{e:SDO2}
    z^D_{SDO} =  \sup_{(y, S)} \left\{ b^{\top} y:\sum_{i=1}^m y_i A_i + S = C,~S\succeq 0, y \in \mathbb{R}^{m} \right\},\tag{SDOP-D}
\end{equation}
where $S = C - \sum_{i=1}^m y_i A_i$ is the slack matrix of the dual problem. Without loss of generality, we may assume that the matrices $A_1, \dots, A_m$ are linearly independent. 

If $X$ and $(y,S)$ satisfy the primal and dual constraints, respectively, we say that they are feasible solutions, denoting the feasible sets of \eqref{e:SDO1} and \eqref{e:SDO2} by:
\begin{align*}
    \Pcal_{SDO} &= \left\{X \in \Scal^n  : A_i \bullet X = b_i, ~i \in [m], X \succeq 0 \right\} \\
    \Dcal_{SDO} &=  \left\{(y, S) \in \mathbb{R}^{m} \times \Scal^n \; : \sum_{i=1}^m y_i A_i + S = C, S \succeq 0 \right\}.
\end{align*}
Accordingly, the sets of feasible interior solutions are given by
\begin{align*}
    \Pcal_{SDO}^0 &= \left\{ X \in \Pcal_{SDO} :  X \succ 0 \right\},\\
    \Dcal_{SDO}^0 &= \left\{ (y, S) \in \Pcal_{SDO} :  S \succ 0 \right\}.
\end{align*}
For ease of notation, we adopt the syntax $\Pcal \Dcal_{SDO}=\Pcal_{SDO}\times\Dcal_{SDO}$ and $\Pcal \Dcal_{SDO}^0=\Pcal_{SDO}^0\times\Dcal_{SDO}^0$.

Just as in the case of LO, when IPMs are applied to SDOPs, it is standard to assume the existence of a strictly feasible primal-dual pair $X$ and $(y, S)$ with $(X, S) \succ 0$. From the existence of a strictly feasible initial solution $(X^0, S^0) \succ 0$, it follows that the Interior Point Condition (IPC) is satisfied \citep{de2006aspects}, guaranteeing that the primal and dual optimal sets
\begin{align*}
    \Pcal_{SDO}^* &= \left\{X \in \Pcal_{SDO} :  C \bullet X = z^P_{SDO} \right\}, \\
    \Dcal_{SDO}^* &=  \left\{(y, S) \in \Dcal_{SDO} :  b^{\top} y = z^D_{SDO} \right\},
\end{align*}
are nonempty and bounded, that an optimal primal-dual pair with zero duality gap exists, i.e., strong duality holds. That is, for optimal solutions
$( X^*, y^*, S^*) \in \Pcal\Dcal_{SDO}^*$, where $\Pcal \Dcal_{SDO}^*=\Pcal_{SDO}^*\times\Dcal_{SDO}^*$, we have
\begin{equation*}
    C \bullet X^* - b^{\top} y^*  = X^* \bullet S^*  = 0,
\end{equation*}
which implies $X^* S^* = S^* X^* = 0$ as $X^*$ and $S^*$ are symmetric
positive semidefinite matrices.

\subsection{Instance Generators for SDOPs}\label{sec: SDOGen}

Similar to our work on LO, we propose three generators that produce SDO instances with a predefined interior solution, optimal solution, and both. Each generator is designed such that the user can control the characteristics of parameters such as condition number, sparsity, matrix structure, and size. Additionally, users can modify the features of optimal solutions to evaluate the performance of their algorithms.

\subsubsection{SDOPs with a Predefined Interior Solution}\label{sec: SDOINT}

To study the performance of IPMs applied to SDO, it is helpful to have instances with a specific interior solution. Generally, some users may need to generate problems with an interior solution to ensure that Strong Duality, i.e., zero duality gap at optimality, holds. Along this line, we adapt Algorithm~\ref{alg: LO-IP} to generate SDO instances with known interior solutions, as given in Algorithm~\ref{alg: SDO-IP}. 

\begin{algorithm}[H] 
\caption{Generating SDO problems with a specific interior solution} \label{alg: SDO-IP}
\begin{algorithmic}[1]
\State Choose dimensions $m,n$ with $m<\frac{n(n+1)}{2}$
\State Generate $(X^0,S^0)$ such that $X^0\succ0$ and $S^0\succ0$ 
\State Generate $A_i \in \mathcal{S}^n$ for $i \in [m]$
\State Generate $y^0 \in \mathbb{R}^{m}$   
\State Calculate $b_i= A_i \bullet X^0$ for $i \in [m]$ and $C=\sum_{i=1}^{m}y^0_iA_i+S^0$
\State \textbf{Return }{SDOP $(A_1, \dots, A_m, b, C)$ with interior solution
$(X^0,y^0,S^0)$}
\end{algorithmic}
\end{algorithm}

Compared to Algorithm~\ref{alg: LO-IP}, the task of generating $X^0$ and $S^0$ in a general manner such that $X^0S^0=\mu I$ for $\mu>0$ is more computationally involved; we would first have to generate $X^0\succ0$ randomly, and subsequently calculate $S^0$ as $S^0=\mu (X^0)^{-1}$. However, we can easily generate $X^0$ and $S^0$ for a specified value of $\mu$ if we make additional assumptions regarding their structure (e.g., we can assume they are diagonal). We can also generate the matrices $A_1, \dots, A_m$ to have desired properties such as sparsity, conditioning, or to satisfy some norm bound. Several approaches for generating random positive semidefinite are discussed in Appendix~\ref{appendix:PSD}.

\subsubsection{SDOPs with a Predefined Block-diagonal Optimal Solution}\label{sec: SDOOPT1}
Algorithm~\ref{alg: SDO-OP} can be seen as a generalization of Algorithm~\ref{alg: LO-OP} to SDO problems, in which the generated optimal solution explicitly has a block-diagonal structure corresponding to the optimal partition. Before presenting the instance generator, we review the notation of the optimal partition in the context of SDO. 
 
 We are interested in problems whose optimal solution $(X^*,y^*, S^*)$ exhibits zero duality gap, i.e., $X^*S^*=0$. Thus, the spectral decomposition of an optimal pair $X^*$ and $S^*$ takes the form
 $$X^*=Q\Sigma Q^{\top} \text{ and } S^*=Q\Lambda Q^{\top},$$
 where $Q$ is orthonormal, and the matrices $\Sigma$ and $\Lambda$ are diagonal, containing eigenvalues of $X^*$ and $S^*$, respectively. Letting $\sigma_i=\Sigma_{i,i}$ and $\lambda_i=\Lambda_{i,i}$, it follows that $X^*S^*=0$ holds if and only if $\sigma_i\lambda_i=0$ for all $i\in [n]$. 
 A primal-dual optimal solution $(X^*,y^*,S^*) \in {\Pcal}{\Dcal}_{SDO}^*$ is called maximally complementary if $X^* \in \ri({\Pcal}_{SDO}^*)$ and $(y^*,S^*) \in \ri({\Dcal}_{SDO}^*)$. A maximally complementary optimal solution $(X^*,y^*,S^*)$ is called strictly complementary if $X^* + S^* \succ 0$. 
 Let $\Bcal \coloneqq \Rcal(X^*)$ and $\Ncal \coloneqq \Rcal(S^*)$, where $(X^*,y^*,S^*)$ is a maximally complementary optimal solution and $\Rcal(.)$ denotes the range space. We define $n_{\Bcal} \coloneqq \dim (\Bcal)$ and $n_{\Ncal} \coloneqq \dim (\Ncal)$. 
 Then, we have $\Rcal(X) \subseteq B$ and $\Rcal(S) \subseteq \Ncal$ for all $(X,y,S) \in \Pcal\Dcal_{SDO}^*$. 
 By the complementarity condition, the subspaces $\Bcal$ and $\Ncal$ are orthogonal, and this implies that $n_{\Bcal} + n_{\Ncal} \leq n$, and in case of strict complementarity, $n_{\Bcal} + n_{\Ncal} = n$. Otherwise, a subspace $\Tcal$ exists, which is the orthogonal complement to $\Bcal+\Ncal$. Similarly, we have $n_{\Tcal} \coloneqq \dim (\Tcal)$, and so $n_{\Bcal}+n_{\Ncal}+n_{\Tcal}=n$ \citep{mohammad2020identification}.
The partition $(\Bcal, \Ncal, \Tcal)$ of $\Rmbb^n$ is called the optimal partition of an SDO problem. In LOPs, we know that $\Tcal$ is empty, but in general SDOPs $\Tcal$ can be non-empty \citep{de2006aspects}. 

In Algorithm~\ref{alg: SDO-OP}, we generate SDOPs with optimal solutions which exhibit a block-diagonal structure using a partition $(B,N,T)$, which may be different from the optimal partition $(\Bcal,\Ncal,\Tcal)$ of the generated problem.

\begin{algorithm}[H]
\caption{Generating SDO problems with a specific optimal solution} \label{alg: SDO-OP}
\begin{algorithmic}[1]
\State Choose dimensions $m,n$ with $m<\frac{n(n+1)}{2}$
\State Choose $n_{B},n_{N}\in[n]$ where $n_{B}+n_{N}\leq n$
\State Generate  positive definite matrix $X_B \in \Scal_{++}^{n_{B}}$
\State Generate positive definite matrix $S_N\in \Scal_{++}^{n_{N}}$
\State Build\footnote{The matrices are partitioned according to $n_B$, $n_T$, and $n_N$.} $X^*=\begin{pmatrix}
X_B &0& 0\\
0 & \text{0}&0\\
0&0&0
\end{pmatrix}$ and  $S^*=\begin{pmatrix}
0 & 0&0\\
0 & \text{0}&0\\
0 & 0&S_N
\end{pmatrix}$
\State Generate $A_i \in \mathcal{S}^n$ for $i \in [m]$
\State Generate $y^* \in \mathbb{R}^{m}$  
\State Calculate $b_i=A_i \bullet X^*$ for $i \in [m]$ and $C=\sum_{i=1}^{m}y^*_iA_i+S^*$
\State \textbf{Return }{SDOP $(A_1,\dots,A_m, b, C)$ with optimal solution
$(X^*,y^*,S^*)$}
\end{algorithmic}
\end{algorithm}

\begin{remark}\label{rem:notmax}
The sets $(B,N,T)$ generated in Algorithm~\ref{alg: SDO-OP} are not necessarily the optimal partition $(\Bcal,\Ncal,\Tcal)$ for the generated SDO problem $(A_1,\dots,A_m, b, C)$. In general, we only have
\begin{equation*}
    B\subseteq\Bcal, N\subseteq\Ncal,\text{ and } \Tcal\subseteq T.
\end{equation*}
\end{remark}
\begin{remark}
If an SDOP with a strictly complementary optimal solution is required, then we set $n_{N}=n-n_{B}$. In this case the optimal partition is predefined as $\Bcal=B$, $\Ncal=N$, and $\Tcal=\emptyset$.
\end{remark}
One can easily verify that the solution $(X^*,y^*,S^*)$ generated by Algorithm~\ref{alg: SDO-OP} is feasible for the SDO problem $(A_1,\dots,A_m,b,C)$, and optimal since $X^*S^*=0$. In addition,  matrices $A_i$ and $C$ can be generated in a way to exhibit a particular sparsity, condition number or norm, and we can also control primal and/or dual degeneracy. 

\subsubsection{SDOPs with Predefined Block-diagonal Optimal and Interior Solutions}\label{sec: SDOINTOPT1}

By generating SDO problems with specific interior and optimal solutions, we can study the performance of various solution approaches. For example, one can analyze how efficiently feasible IPMs reduce the complementarity starting from a predefined interior solution to an optimal solution, or alternatively examine how robust performance is to the provided starting point or changes in the characteristics of the optimal solutions or partition. To accomplish this, we propose several algorithms in this paper providing an optimal solution or a maximally complementary solution.

This section is focused on the case in which the user is interested in predefining an interior solution and an optimal solution, which need not necessarily be maximally complementary. Accordingly, Algorithm~\ref{alg: SDO-IP-OP} generalizes Algorithm~\ref{alg: LO-OP-IP} to SDO for the case in which the generated optimal solution has a block-diagonal structure. We similarly seek to generate an optimal solution $(X^*,y^*, S^*)$ and interior solution $(X^0, y^0, S^0)$ as generally as possible, but we need to impose some additional requirements. Letting $\Lcal=\text{span}\{A_1,\dots,A_m\}$, we have $X^0-X^*\in \Lcal^{\perp}$ and $S^0-S^*\in \Lcal$, and hence, the generated solutions are required to satisfy the orthogonality condition
\begin{equation}\label{con2}
    (X^0-X^*) \bullet (S^0-S^*) =0.
\end{equation}
In Algorithm~\ref{alg: SDO-IP-OP}, steps~\ref{al7s12} and \ref{al7s13} are designed to ensure the generated solutions $(X^*,y^*,S^*)$ and $(X^0, y^0, S^0)$ indeed satisfy orthogonality.
\begin{algorithm}[H]

\caption{Generating SDO problems with specific interior and optimal solutions} \label{alg: SDO-IP-OP}
\begin{algorithmic}[1]
\State Choose dimensions $m,n$ with $m<\frac{n(n+1)}{2}$
\State Choose $n_B,n_N\in [n]$ where $n_B+n_N\leq n$
\State Generate SDO problem $(\Ahat_1,\dots ,\Ahat_m, \bhat,\Chat)$ with optimal solution $(\Xhat,\yhat,\shat)$ using Algorithm~\ref{alg: SDO-OP}

\State Generate $X^0_B\succ0$, $X^0_T\succ0$,$X^0_N\succ0$,$X^0_{n+1}>0$ randomly
\State Build $X^*_{(n+1)\times(n+1)}=\begin{pmatrix}
\hat{X}& 0\\
0 & 0
\end{pmatrix}$ and $X^0_{(n+1)\times(n+1)}=\begin{pmatrix}
X_B^0&0& 0&0\\
0&X^0_T&0&0\\
0 &0& X^0_N&0\\
0&0&0&X^0_{n+1}
\end{pmatrix}$
\State Generate $S^0_T\succ0$, $S^0_B\succ0$, $S^0_N\succ0$ randomly\label{al7s11}
\State Calculate $\delta=(X_B^0-\Xhat_B)\bullet S_B^0+X_T^0\bullet S_T^0+X_N^0\bullet(S_N^0-\Shat_N)$\label{al7s12}
\State Generate $S_{n+1}^0>(\frac{-\delta}{X^0_{n+1}})^+$ and calculate $\hat{S}_{n+1}=\frac{\delta}{X^0_{n+1}}+S_{n+1}^0$\label{al7s13}
\State Build $S^*_{(n+1)\times(n+1)}=\begin{pmatrix}
\hat{S}& 0\\
0 & \hat{S}_{n+1}
\end{pmatrix}$ and $S^0_{(n+1)\times(n+1)}=\begin{pmatrix}
S^0_B&0& 0&0\\
0&S^0_T&0&0\\
0&0& S^0_N&0\\
0&0&0&S^0_{n+1}
\end{pmatrix}$
\State Generate $y^0\in \mathbb{R}_{m+1}$ randomly such that $y_{m+1}^0\not=0$

\State Build $y^*=\begin{pmatrix}
\hat{y}\\
0 
\end{pmatrix}\in \mathbb{R}_{m+1}$ 
\State Calculate
$
    \alpha_i=\frac{1}{X_{n+1}^0}( \hat{A}_{i_B} \bullet (X_B-X^0_B))-(\hat{A}_{i_N} \bullet X^0_N)- (\hat{A}_{i_T} \bullet X^0_T)) 
$ for $i \in [m]$
\State Build $A_i= \begin{pmatrix}
\hat{A}_i& 0\\
0 &\alpha_i
\end{pmatrix}$  for $i \in [m]$
\State Build $\displaystyle A_{n+1}=\sum_{i=1}^{m}\frac{\hat{y}_i-y^0_i}{y^0_{m+1}}A_i+  \frac{1}{y^0_{m+1}}\begin{pmatrix}
-S^0_B&0& 0&0\\
0 &-S^0_T& 0&0\\
0&0&\hat{S}_N-S^0_N&0\\
0&0&0&\hat{S}_{n+1}-S^0_{n+1}
\end{pmatrix}$ 
\State Calculate $\theta= \hat{S}_{n+1}+\sum_{i=1}^{m}\hat{y}_i\alpha_i$ and build $C= \begin{pmatrix}
\hat{C}& 0\\
0 & \theta
\end{pmatrix}$  

\State Calculate $\beta= A_{n+1} \bullet X^*$ and build $b=\begin{pmatrix}
\hat{b}\\
\beta
\end{pmatrix}$
\State \textbf{Return }{SDOP $(A_1,\dots,A_m, b, C)$ with optimal solution $(X^*,y^*,S^*)$ and interior solution $(X^0,y^0,S^0)$}  

\end{algorithmic}
\end{algorithm}
Before proving the correctness of Algorithm \ref{alg: SDO-IP-OP}, the next result certifies the orthogonality properties of the generated solution.

\begin{lemma}\label{lem: orthogonality of SDO}
For any $(X^0,y^0,S^0)$ and $(X^*,y^*,S^*)$ generated by Algorithm \ref{alg: SDO-IP-OP}, we have
$$(X^0-X^*)\bullet(S^0-S^*)=0.$$
\end{lemma}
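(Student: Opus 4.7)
The plan is to exploit the block-diagonal structure of all four matrices built in Algorithm~\ref{alg: SDO-IP-OP}. Because $X^*$, $X^0$, $S^*$, and $S^0$ share the same block partition into blocks of sizes $n_B$, $n_T$, $n_N$, and $1$, the Frobenius inner product $(X^0-X^*)\bullet(S^0-S^*)$ decomposes as a sum of four block-wise inner products, and no cross-block terms survive. First I would write out the blocks of $X^*$ and $S^*$ explicitly: using that $\hat{X}$ has $\hat{X}_B$ in its $B$-block and zeros elsewhere and that $\hat{S}$ has $\hat{S}_N$ in its $N$-block and zeros elsewhere, the $T$- and $(n+1)$-blocks of $X^*$ and the $B$- and $T$-blocks of $S^*$ vanish.

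Next I would compute each of the four surviving block contributions. The $B$-block contributes $(X^0_B-\hat{X}_B)\bullet S^0_B$, the $T$-block contributes $X^0_T\bullet S^0_T$, the $N$-block contributes $X^0_N\bullet(S^0_N-\hat{S}_N)$, and the $(n+1)$-block contributes the scalar product $X^0_{n+1}(S^0_{n+1}-\hat{S}_{n+1})$. The sum of the first three blocks is exactly $\delta$ as defined in Step~\ref{al7s12} of the algorithm.

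The final step uses Step~\ref{al7s13}, where $\hat{S}_{n+1}=\frac{\delta}{X^0_{n+1}}+S^0_{n+1}$; rearranging gives $X^0_{n+1}(S^0_{n+1}-\hat{S}_{n+1})=-\delta$, so the $(n+1)$-block cancels the contribution of the first three blocks, and the total inner product is zero.

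There is no real obstacle here; the statement is essentially a verification, and the only thing to be careful about is not introducing spurious cross-block terms when expanding the Frobenius product, and correctly reading off the zero sub-blocks of $X^*$ and $S^*$ from the construction in Algorithm~\ref{alg: SDO-OP}. Once these are tracked correctly, the identity $\hat{S}_{n+1}X^0_{n+1}=\delta+S^0_{n+1}X^0_{n+1}$ makes the cancellation automatic, and the lemma follows. Since the resulting orthogonality mirrors the linear-algebraic identity $(X^0-X^*)\in\mathcal{L}^{\perp}$, $(S^0-S^*)\in\mathcal{L}$ discussed just before the algorithm, this proof also serves as a sanity check that Steps~\ref{al7s12}--\ref{al7s13} correctly enforce condition~\eqref{con2}.
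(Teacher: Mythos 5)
Your proof is correct and takes essentially the same approach the paper intends: the paper's own proof is just the one-line remark that the claim follows ``by substitution and using Steps~\ref{al7s12} and \ref{al7s13}'', which is exactly the block-wise expansion you carry out. Your write-up simply supplies the details the paper leaves implicit --- the common block-diagonal partition eliminating cross terms, the identification of the $B$-, $T$-, and $N$-block contributions with $\delta$, and the cancellation coming from $\hat{S}_{n+1}=\delta/X^0_{n+1}+S^0_{n+1}$.
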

\begin{proof}
Similar to the proof of Lemma~\ref{lem: orthogonality of LO}, it can be proved by substitution and using Steps~\ref{al7s12} and \ref{al7s13}.
\end{proof}

Using Lemma~\ref{lem: orthogonality of SDO}, the following theorem shows that the generated problem satisfies the desired properties.
\begin{theorem} \label{theo: SDO-IP-OP}
Let $(X^0, y^0, S^0)$ and $(X^*, y^*, S^*)$ be solutions generated by Algorithm~\ref{alg: SDO-IP-OP}. Then,
\begin{subequations}
\begin{align}
X^*\succeq0,~S^*\succeq0,~X^0\succ0,~S^0&\succ0,\\
    X^* \bullet S^* &=0,\\
    A_i \bullet X^* &=b_i,\\
    \sum_{i=1}^{n}y^*_iA^{\top}_i+S^*&=C,\\
    A_i \bullet X^0 &=b_i,\label{theo2e14}\\
    \sum_{i=1}^{n}y^0_iA^{\top}_i+S^0&=C. 
\end{align}
\end{subequations}
\end{theorem}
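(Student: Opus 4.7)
The overall strategy is a direct analogue of the proof of Theorem~\ref{t1}: check each of the six groups of conditions in turn, relying on the block structure of the constructed matrices plus the orthogonality identity of Lemma~\ref{lem: orthogonality of SDO} for the one nontrivial case. Since Algorithm~\ref{alg: SDO-IP-OP} is built on top of Algorithm~\ref{alg: SDO-OP}, I plan to freely use that $(\hat X, \hat y, \hat S)$ is primal--dual feasible and complementary for $(\hat A_1,\ldots,\hat A_m, \hat b, \hat C)$, i.e.\ $\hat A_i\bullet \hat X=\hat b_i$, $\sum_i \hat y_i \hat A_i + \hat S = \hat C$, and $\hat X\hat S = 0$.

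The first three conditions are essentially immediate. The positivity statements hold by construction: in each of Steps~4,~6, and~8 of the algorithm the positive (semi)definite blocks of $X^*$, $S^*$, $X^0$, $S^0$ are generated explicitly, and the only potentially delicate point --- positivity of $\hat S_{n+1}$ and $S^0_{n+1}$ --- is guaranteed by the lower bound imposed on $S^0_{n+1}$ in Step~8. For complementarity $X^*\bullet S^*=0$, the block structure of $X^*$ and $S^*$ gives $X^*\bullet S^* = \hat X\bullet \hat S + 0\cdot \hat S_{n+1}$, and $\hat X\bullet \hat S=0$ by Algorithm~\ref{alg: SDO-OP}. For primal feasibility of $X^*$, for $i\in[m]$ the block structure yields $A_i\bullet X^* = \hat A_i\bullet \hat X = \hat b_i=b_i$, and for $i=m+1$ the identity $A_{n+1}\bullet X^* = \beta = b_{m+1}$ is the very definition of $\beta$ in Step~16.

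Dual feasibility of $(y^*,S^*)$ is handled by expanding $\sum_{i=1}^{m} \hat y_i A_i + 0\cdot A_{n+1} + S^*$ block by block. The upper $n\times n$ block reduces to $\sum_i \hat y_i \hat A_i + \hat S = \hat C$, while the $(n+1,n+1)$ entry is $\sum_i \hat y_i \alpha_i + \hat S_{n+1} = \theta$ by definition of $\theta$ in Step~15. Primal feasibility of $X^0$ for $i\in[m]$ uses the explicit choice of $\alpha_i$ in Step~12: expanding $A_i\bullet X^0$ gives $\hat A_{i_B}\bullet X_B^0 + \hat A_{i_T}\bullet X_T^0 + \hat A_{i_N}\bullet X_N^0 + \alpha_i X^0_{n+1}$, and the definition of $\alpha_i$ is chosen precisely so that this collapses to $\hat A_i\bullet \hat X = \hat b_i$. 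For $i=m+1$, I would need to verify $A_{n+1}\bullet X^0 = \beta = A_{n+1}\bullet X^*$; writing $X^0 = X^* + (X^0-X^*)$ and using that $X^0-X^*$ is symmetric, this reduces to showing $A_{n+1}\bullet(X^0-X^*) = 0$, which after expanding $A_{n+1}$ using Step~14 becomes a consequence of the orthogonality identity of Lemma~\ref{lem: orthogonality of SDO} together with the block structure.

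The hard part, mirroring the long display in the proof of Theorem~\ref{t1}, is equation (f): dual feasibility of the interior solution, $\sum_i y_i^0 A_i + S^0 = C$. Substituting the definition of $A_{n+1}$ from Step~14, the sum telescopes so that $\sum_{i=1}^{m} y_i^0 A_i + y^0_{m+1} A_{n+1}$ contributes $\sum_i \hat y_i \hat A_i$ in the upper $n\times n$ block plus a ``correction'' of $-S^0_B, -S^0_T, \hat S_N - S^0_N$ and $\hat S_{n+1} - S^0_{n+1}$ on the diagonal; adding $S^0$ produces exactly $\hat C$ in the upper block. The only nontrivial verification is the $(n+1,n+1)$ entry, which must equal $\theta$. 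This entry simplifies to $\sum_i y^0_i\alpha_i + \frac{y^0_{m+1}}{y^0_{m+1}}(\hat S_{n+1}-S^0_{n+1}) + S^0_{n+1} = \sum_i y^0_i \alpha_i + \hat S_{n+1}$, and showing this equals $\theta = \sum_i \hat y_i \alpha_i + \hat S_{n+1}$ requires showing $\sum_i (\hat y_i - y^0_i)\alpha_i = 0$. I plan to expand this sum using the definition of $\alpha_i$, regroup the inner products (pulling $\sum_i (\hat y_i - y^0_i)\hat A_{i_\bullet}$ out so that the dual feasibility of $\hat X,\hat y,\hat S$ converts each block sum into a slack contribution), and show the result equals $\frac{1}{X^0_{n+1}}(X^0-X^*)\bullet(S^0-S^*)$, which vanishes by Lemma~\ref{lem: orthogonality of SDO}. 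This is the direct SDO analogue of the final block calculation in Theorem~\ref{t1}, and I expect it to be the main bookkeeping obstacle, with everything else being routine block-matrix arithmetic.
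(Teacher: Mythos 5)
Your overall plan matches the paper's (very terse) proof: verify each condition directly from the block structure, and invoke Lemma~\ref{lem: orthogonality of SDO} for the one genuinely nontrivial identity. You correctly locate that identity where the paper does: the paper states that the only delicate point is \eqref{theo2e14} for the last constraint, i.e.\ $A_{m+1}\bullet X^0=b_{m+1}$, and your reduction of it to $A_{m+1}\bullet(X^0-X^*)=0$ followed by an application of the orthogonality lemma is exactly right.

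However, the step you single out as the ``main bookkeeping obstacle'' --- the $(n+1,n+1)$ entry in the dual feasibility of the interior solution --- contains a concrete slip, and the repair you propose would not work. In Step~14 the matrix $A_{m+1}$ is built from the \emph{extended} matrices $A_i=\bigl(\begin{smallmatrix}\hat A_i&0\\0&\alpha_i\end{smallmatrix}\bigr)$, so $y^0_{m+1}A_{m+1}$ contributes $\sum_{i=1}^m(\hat y_i-y^0_i)\alpha_i$ to the $(n+1,n+1)$ entry in addition to the correction $\hat S_{n+1}-S^0_{n+1}$. Restoring that term, the entry telescopes to $\sum_i y^0_i\alpha_i+\sum_i(\hat y_i-y^0_i)\alpha_i+(\hat S_{n+1}-S^0_{n+1})+S^0_{n+1}=\sum_i\hat y_i\alpha_i+\hat S_{n+1}=\theta$ with no further argument. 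The identity $\sum_i(\hat y_i-y^0_i)\alpha_i=0$ that you propose to prove instead is false in general: by the definition of $\alpha_i$ it equals $\tfrac{1}{X^0_{n+1}}\bigl(\sum_i(\hat y_i-y^0_i)\hat A_i\bigr)\bullet(\hat X-\hat X^0)$, and since $y^0$ is generated freely there is no reason for this to vanish, nor does it follow from Lemma~\ref{lem: orthogonality of SDO}. So the difficulty you perceive is an artifact of the dropped term; once it is restored, dual feasibility of $(y^0,S^0)$ becomes the easy part, and the orthogonality lemma is needed exactly once --- in the primal feasibility of $X^0$ for the added constraint, precisely where the paper says it is.
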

\begin{proof}
Just as in the case of LO, all parts of Theorem~\ref{theo: SDO-IP-OP} are easy to verify based on the steps of Algorithm~\ref{alg: SDO-IP-OP}, save for equation~\eqref{theo2e14} for $i=n+1$. Following the proof of Theorem \ref{t1}, the claimed result follows from the definition of $\alpha$ and equation \eqref{con2}.
\end{proof}

Similar to generating SDOPs with an optimal solution, we can also generate problems with both a specific strictly complementary optimal solution and a specific interior solution.
\begin{remark}
One special case is when $n_B+n_N=n$, $T=\emptyset$, and
$$X^0_B=\Xhat_B, X^0_N=I, X^0_T=I,S^0_B=I,S^0_T=I,S^0_N=\Shat_N, X^0_{n+1}=1, S^0_{n+1}=1.$$
\end{remark}

\subsubsection{SDOPs with Predefined Optimal Solution (General Structure)}\label{sec: SDOOPT2}
We are also interested in the situation where the desired optimal solution does not exhibit a block structure. Some methods can exploit the structural properties of the optimal solution, for example, when it exhibits a block-diagonal structure or is sparse. In order to generate an optimal solution that possesses certain desired qualities, we use the inverse process of eigenvalue decomposition. First, we generate diagonal matrices $\Sigma$ and $\Lambda$, whose diagonal elements are the eigenvalues of $X^*$ and $S^*$, respectively. Then, $X^*$ and $S^*$ can be calculated by masking these diagonal matrices using a randomly generated orthonormal matrix $Q$, and techniques for generating orthonormal matrices are discussed in Appendix \ref{appendix:ortho}. The overall scheme is formalized below in Algorithm~\ref{alg: SDO-OP-Q}. 
\begin{algorithm}[H]

\caption{Generating SDO problems with a specific optimal solution} \label{alg: SDO-OP-Q}
\begin{algorithmic}[1]
\State Choose dimensions $m,n$ with $m<\frac{n(n+1)}{2}$
\State Choose $n_B,n_N\in[n]$ where $n_B+n_N\leq n$
\State Generate $\sigma_i>0$ for $i\in [n_B]$
and $\lambda_i>0$ for $i\in [n_N]$
\State Generate orthonormal matrix Q
\State Build $X^*=Q\begin{pmatrix}
\text{diag}(\sigma) &0& 0\\
0 & 0&0\\
0&0&0
\end{pmatrix}Q^{\top}$ and  $S^*=Q\begin{pmatrix}
0 & 0&0\\
0 & 0&0\\
0 & 0&\text{diag}(\lambda)
\end{pmatrix}Q^{\top}$
\State Generate $A_i \in \mathcal{S}^n$ randomly for $i \in [m]$
\State Generate $y^* \in \mathbb{R}^{m}$ randomly  
\State Calculate $b_i= A_i \bullet X^*$ for $i \in [m]$ and $C=\sum_{i=1}^{m}y^*_iA_i+S^*$
\State \textbf{Return }{SDOP $(A_1,\dots,A_m, b, C)$ with optimal solution
$(X^*,y^*,S^*)$}
\end{algorithmic}
\end{algorithm}
While Algorithm~\ref{alg: SDO-OP-Q} generates a more general optimal solution than Algorithm~\ref{alg: SDO-OP}, it is computationally more demanding due to several matrix multiplications and generating an orthonormal matrix. In Appendix~\ref{appendix:ortho}, some procedures to generate an orthogonal matrix are discussed. One can easily verify that $(X^*,y^*,S^*)$ is optimal since $X^*S^*=Q\Sigma\Lambda Q^{\top}=0$. However, similar to Remark~\ref{rem:notmax}, the generated optimal solution may not be the maximally complementary solution for the SDOP $(A_1,\dots,A_m, b, C)$. Thus, the optimal partition $(\Bcal, \Ncal, \Tcal)$ of the generated SDOP may be different from $(B, N, T)$ such that $\dim(\Bcal)\geq n_B$ and $\dim(\Ncal)\geq n_N$, i.e. the set of indices such that $\sigma_i=\lambda_i=0$ may be bigger than the partition $T$. The next section discusses how we can generate SDOPs with predefined optimal partition.

 \subsubsection{SDOPs with a Predefined Maximally Complementary Solution (General Structure)}\label{sec: SDOOPT2M}
The SDOP generated by Algorithm~\ref{alg: SDO-OP-Q} may have an optimal partition that differs from the input partition, since the specified optimal solution may not be maximally complementary. In this section, we develop a procedure to generate SDOPs with a specific optimal partition, and by extension, a specific maximally complementary solution.
\begin{algorithm}[H]

\caption{Generating SDO problems with a specific maximally complementary solution} \label{alg: SDO-OP-Q-M}
\begin{algorithmic}[1]
\State Choose dimensions $m,n$ with $m<\frac{n(n+1)}{2}$
\State Choose $n_B,n_N\in[n]$ where $n_B+n_N\leq n$
\State Generate $\sigma_i>0$ for $i\in [n_B]$
and $\lambda_i>0$ for $i\in [n_N]$
\State Generate orthonormal matrix Q
\State Build $X^*=Q\begin{pmatrix}
\text{diag}(\sigma) &0& 0\\
0 & 0&0\\
0&0&0
\end{pmatrix}Q^{\top}$ and  $S^*=Q\begin{pmatrix}
0 & 0&0\\
0 & 0&0\\
0 & 0&\text{diag}(\lambda)
\end{pmatrix}Q^{\top}$
\State Generate $A_1 = Q \Gamma Q^{\top}$ such that $\Gamma=\text{diag}(\gamma)$ where $\gamma_B=0$, $\gamma_T>0$, and $\gamma_N\in\Rmbb^{n_N}$
\State Generate $A_i \in \mathcal{S}^n$ $i \in [m]$ such that $A_iQ_B$ are linearly independent for $i \in [m]$
\State Generate $y^* \in \mathbb{R}^{m}$ 
\State Calculate $b_i=A_i \bullet X^*$ for $i \in [m]$ and $C=\sum_{i=1}^{m}y^*_iA_i+S^*$
\State \textbf{Return }{SDOP $(A_1,\dots,A_m, b, C)$ with maximally complementary solution
$(X^*,y^*,S^*)$}
\end{algorithmic}
\end{algorithm}
As we can see, there is less freedom in generating an SDOP using Algorithm~\ref{alg: SDO-OP-Q-M} when compared to Algorithm~\ref{alg: SDO-OP-Q}. This can be attributed to the fact that the matrix $A_1$ is specified to ensure that the specified optimal solution is maximally complementary, and we can not alter its characteristics directly. The next theorem proves the correctness of the generator.
\begin{theorem}\label{theo: maxcomSDOeigopt}
For the generated problem $(A_1,\dots,A_m, b, C)$ by Algorithm~\ref{alg: SDO-OP-Q-M}, the solution $(X^*,y^*,S^*)$ is a maximally complementary optimal solution.
\end{theorem}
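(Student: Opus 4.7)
The proof naturally splits into three parts: (i) verifying that $(X^*, y^*, S^*)$ is a primal-dual optimum of the generated problem, (ii) establishing $X^* \in \ri(\Pcal_{SDO}^*)$, and (iii) establishing $(y^*, S^*) \in \ri(\Dcal_{SDO}^*)$. Part (i) is immediate: primal and dual feasibility come from how $b$ and $C$ are assembled in Step~9, and positive semidefiniteness of $X^*$ and $S^*$ is built in through the decomposition $Q\Lambda Q^\top$ with nonnegative diagonal blocks. Complementarity is the direct computation $X^* S^* = Q\,\mathrm{diag}(\sigma,0,0)\,\mathrm{diag}(0,0,\lambda)\,Q^\top = 0$, since $\sigma$ and $\lambda$ have disjoint diagonal supports.

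For part (ii), I would take an arbitrary primal optimum $X$ and prove $\Rcal(X) \subseteq \mathrm{span}(Q_B) = \Rcal(X^*)$. Complementarity with $S^*$ gives $X S^* = 0$, so $\Rcal(X)$ is orthogonal to $\mathrm{span}(Q_N)$; equivalently, $\tilde X := Q^\top X Q$ is supported in the $B \cup T$ block, with $\tilde X_{TT}$ inheriting positive semidefiniteness as a principal submatrix. The matrix $A_1$ enters decisively here: since $\gamma_B = 0$ we have $b_1 = A_1 \bullet X^* = 0$, so the primal constraint forces $A_1 \bullet X = 0$, which in the $Q$-basis reads $\sum_{i \in T} \gamma_i \tilde X_{ii} = 0$. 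Combining $\gamma_T > 0$ with the nonnegativity of PSD diagonal entries gives $\tilde X_{ii} = 0$ for every $i \in T$, and the PSD property then annihilates the entire $T$-row and column of $\tilde X$. Therefore $\Rcal(X) \subseteq \mathrm{span}(Q_B)$, as required.

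For part (iii), I would parameterize every dual feasible pair as $(y, S) = (y^* - z,\, S^* + \sum_i z_i A_i)$ and argue that dual optimality forces $z = 0$. Complementarity of $S$ with the primal optimum $X^*$ gives $S Q_B = 0$, and hence $\sum_i z_i A_i Q_B = 0$. The linear-independence hypothesis of the algorithm on $\{A_i Q_B\}$ then forces $z = 0$, which identifies $(y^*, S^*)$ as the unique dual optimum, placing it trivially in $\ri(\Dcal_{SDO}^*)$.

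The delicate step is part (iii). Because $A_1 = Q \Gamma Q^\top$ with $\gamma_B = 0$ gives $A_1 Q_B = 0$ by construction, the linear-independence hypothesis must be read as applying to the nontrivial sub-family $\{A_i Q_B\}_{i=2}^m$; that reduction alone leaves $z_1$ unconstrained. To pin down $z_1$, I would invoke the PSD requirement on $S^* + z_1 A_1 = Q\,\mathrm{diag}(0,\, z_1 \gamma_T,\, \lambda + z_1 \gamma_N)\,Q^\top$ together with the sign structure of $\gamma$ to rule out any nontrivial extension of the dual range into $\mathrm{span}(Q_T)$. Harmonizing this sign analysis with the algorithm's condition ``$\gamma_T > 0$'' so that the dual range cannot be enlarged beyond $\mathrm{span}(Q_N)$ is the main technical point, and the step I would spend the most care on.
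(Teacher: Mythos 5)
Your overall strategy coincides with the paper's own proof: parts (i) and (ii) are exactly the published argument (the paper phrases (ii) as a contradiction with a maximally complementary $\tilde X$, but the content --- $\tilde X S^*=0$ confines $Q^{\top}\tilde X Q$ to the $B\cup T$ principal block, and $A_1\bullet(\tilde X-X^*)=\Gamma_T\bullet D_T=0$ with $\Gamma_T\succ0$ kills the $T$ block --- is identical, and your handling of the off-diagonal $BT$ entries via positive semidefiniteness is in fact more careful than the paper's block-diagonal shorthand). Part (iii) is also the paper's argument: complementarity with $X^*$ gives $(S-S^*)Q_B=0$, and linear independence of $\{A_iQ_B\}$ is invoked to force $y=y^*$.

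The gap you flag in part (iii) is genuine, and it is present in the paper's proof as well: since $\gamma_B=0$ we have $A_1Q_B=Q\Gamma Q^{\top}Q_B=0$, so the family $\{A_iQ_B\}_{i\in[m]}$ cannot be linearly independent as literally stated, and the relation $\sum_i z_iA_iQ_B=0$ only pins down $z_2,\dots,z_m$. Unfortunately the sign analysis you propose to recover $z_1=0$ cannot succeed. Because $b_1=A_1\bullet X^*=\sum_{i\in B}\gamma_i\sigma_i=0$, the dual objective is constant along the ray $z_1\ge 0$, and $S^*+z_1A_1=Q\,\mathrm{diag}\bigl(0,\,z_1\Gamma_T,\,\Lambda_N+z_1\Gamma_N\bigr)Q^{\top}$ is positive semidefinite for all sufficiently small $z_1\ge 0$: the $T$ block $z_1\Gamma_T$ is positive semidefinite and $\Lambda_N\succ0$ absorbs $z_1\Gamma_N$. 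Hence, whenever $T\neq\emptyset$, the dual optimal set contains slacks whose range strictly contains $\Rcal(S^*)=\mathrm{span}(Q_N)$, and $(y^*,S^*)$ sits at an endpoint of this segment rather than in $\ri(\Dcal_{SDO}^*)$; the PSD requirement yields only $z_1\ge 0$, never $z_1=0$. Closing this requires changing the construction rather than the proof --- for instance a constraint with a nonzero off-diagonal block coupling $\mathrm{span}(Q_B)$ and $\mathrm{span}(Q_T)$, as in the Wei--Wolkowicz construction the paper adapts, so that $S^*+tA_1$ leaves the semidefinite cone for every $t\neq 0$ --- or weakening the conclusion to the primal half of maximal complementarity. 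When $T=\emptyset$ your argument (and the paper's) does go through, since then $z_1$ ranges over a neighborhood of $0$ without enlarging $\Rcal(S)$ beyond $\mathrm{span}(Q_N)$.
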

\begin{proof}
The result follows from a proof by contradiction, which is adapted from \citep{wei2010generating}. Suppose that $(X^*,y^*,S^*)$ is not maximally complementary, and $(\Xtilde,\ytilde,\Stilde)$ is a maximally complementary solution. Since $\Xtilde S^*=0$, we have
$$\Rcal (X^*)\subseteq \Rcal (\Xtilde) \subseteq \Rcal(S^*)^\perp.$$
Therefore, we can write 
$$\Xtilde=Q\begin{pmatrix}D_B&0&0\\0&D_T&0\\0&0&0\end{pmatrix}Q^{\top}.
$$
Since both $\Xtilde$ and $X^*$ are feasible, it follows
\begin{align*}
    0=A_1 \bullet (\Xtilde-X^*)=A_1 \bullet Q\begin{pmatrix}D_B-\Lambda_B&0&0\\0&D_T&0\\0&0&0\end{pmatrix}Q^{\top}&=\Gamma \bullet \begin{pmatrix}D_B-\Lambda_B&0&0\\0&D_T&0\\0&0&0\end{pmatrix}\\
    &=\Gamma_T \bullet D_T.
\end{align*}
Given that $\Gamma_T>0$, it follows that $D_T=0$, which implies $\Rcal(X^*)=\Rcal(\Xtilde)$. 

Next, we need to show that $\Rcal(S^*)=\Rcal(\Stilde)$. Again, from dual feasibility, we have
$$\sum_{i=1}^{m}A_i(y^*_i-\ytilde_i)=-(S^*-\Stilde).
$$
By the orthogonality of $Q_B$ and $S^*-\Stilde$, one can observe
$$\sum_{i=1}^{m}A_iQ_B(y^*_i-\ytilde_i)=-(S^*-\Stilde)Q_B=0.
$$
Since the matrices $A_iQ_B$ are linearly independent for $i \in [m]$, it follows $y^*_i=\ytilde_i$ and $S^*=\Stilde$ and thus $(X^*,y^*,S^*)$ is maximally complementary. Therefore, we have arrived at a contradiction, and the proof is complete.
\end{proof}
\begin{corollary}
For the SDOP generated by Algorithm~\ref{alg: SDO-OP-Q-M}, the optimal partition $(\Bcal,\Ncal,\Tcal)$ is equal to $(B,N,T)$.
\end{corollary}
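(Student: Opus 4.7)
The plan is to derive this corollary as an immediate consequence of Theorem~\ref{theo: maxcomSDOeigopt} combined with the algorithm's explicit spectral construction of $X^*$ and $S^*$. Since the optimal partition $(\Bcal, \Ncal, \Tcal)$ is \emph{defined} in Section~\ref{sec: SDOOPT1} via the range spaces of a maximally complementary optimal solution, once such a solution is in hand, identifying the optimal partition reduces to reading off two range spaces from the generated matrices.

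First I would partition the orthonormal matrix from Step~4 as $Q = [Q_B \; Q_T \; Q_N]$ in accordance with the block sizes $n_B, n_T, n_N$. Because $\sigma_i > 0$ for all $i \in [n_B]$, the block form $X^* = Q\,\mathrm{diag}(\sigma, 0, 0)\,Q^{\top}$ gives $\Rcal(X^*) = \mathrm{span}(Q_B)$, which by the algorithm's convention is precisely the subspace $B$. The symmetric computation using $\lambda_i > 0$ for $i \in [n_N]$ yields $\Rcal(S^*) = \mathrm{span}(Q_N) = N$.

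Next, I would invoke Theorem~\ref{theo: maxcomSDOeigopt} to conclude that the generated $(X^*, y^*, S^*)$ is maximally complementary. By the definitions $\Bcal \coloneqq \Rcal(X^*)$ and $\Ncal \coloneqq \Rcal(S^*)$ recalled in the setup of Section~\ref{sec: SDOOPT1}, the previous paragraph then delivers $\Bcal = B$ and $\Ncal = N$ directly. Finally, since $\Tcal$ is the orthogonal complement of $\Bcal + \Ncal$ in $\Rmbb^n$ while $T = \mathrm{span}(Q_T)$ is the orthogonal complement of $B + N$ (the columns of $Q$ being orthonormal), these two complements coincide, so $\Tcal = T$.

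I do not anticipate any real obstacle in this argument; the substantive work has already been carried out in Theorem~\ref{theo: maxcomSDOeigopt}, and the only task remaining is the bookkeeping of matching the range spaces of the block-diagonal spectral construction against the prescribed partition blocks. If anything, the one point to state carefully is why $B, N, T$ from the algorithm should be read as the subspaces $\mathrm{span}(Q_B), \mathrm{span}(Q_N), \mathrm{span}(Q_T)$, which is implicit in the algorithm but worth making explicit in a one-line remark at the start of the proof.
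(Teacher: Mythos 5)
Your argument is correct and is essentially the route the paper intends: the corollary is stated as an immediate consequence of Theorem~\ref{theo: maxcomSDOeigopt}, since the optimal partition is by definition read off from the range spaces of a maximally complementary solution, and your identification of $\Rcal(X^*)=\mathrm{span}(Q_B)$ and $\Rcal(S^*)=\mathrm{span}(Q_N)$ is exactly the needed bookkeeping. The paper omits this verification entirely, so your explicit treatment (including the remark on reading $B,N,T$ as the spans of the corresponding column blocks of $Q$) is if anything slightly more careful than the source.
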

\begin{remark}
Let $\Pi_i=A_iQ_B$ for $i \in [m]$. One can generate matrix $\Pi_i$ for $i \in \{2,\dots,m\}$ so that the set of matrices $\Pi_i$ for $i\in[m]$ are linearly independent, and calculate $A_i=\Pi_i Q_B^{\top}$. Consequently, the matrices $A_i Q_B$ will be linearly independent with probability 1.
\end{remark}
The framework we have described is correct when $B\not = \emptyset$ and $N\not = \emptyset$. For the cases $B = \emptyset$ and/or  $N = \emptyset$, one can construct a simple procedure, such as the one presented in  Algorithm~\ref{alg: SDO-MC-Q-B}, to generate problems with predetermined optimal partition.

\begin{algorithm}[H]

\caption{Generating SDO problems with a specific maximally complementary solution when $B=\emptyset$} \label{alg: SDO-MC-Q-B}
\begin{algorithmic}[1]
\State Choose dimensions $m,n$ with $m<\frac{n(n+1)}{2}$
\State Choose $n_N\in[n]$
\State Generate $\lambda_i>0$ for $i\in [n_N]$
\State Generate orthonormal matrix Q
\State Build $X^*=0$ and  $S^*=Q\begin{pmatrix}
 0&0\\
 0&\text{diag}(\lambda)
\end{pmatrix}Q^{\top}$
\State Generate $A_i = Q \Gamma_i Q^{\top}$ such that $\Gamma_i=\text{diag}(\gamma_i)$ where  ${\gamma_T}_i=0$, and ${\gamma_N}_i\in\Rmbb^{n_N}$ for $i \in [m]$
\State Generate $y^* \in \mathbb{R}^{m}$ 
\State Let $b=0$ and calculate $C=\sum_{i=1}^{m}y^*_iA_i+S^*$
\State \textbf{Return }{SDO problem $(A_1,\dots,A_m, b, C)$ with optimal solution
$(X^*,y^*,S^*)$}
\end{algorithmic}
\end{algorithm}

\subsubsection{SDOPs with Predefined Optimal and Interior Solutions (General Structure)}\label{sec: SDOINTOPT2}
We can also generalize Algorithm~\ref{alg: SDO-OP-Q} to provide SDOPs with interior solutions, and the resulting scheme is presented in Algorithm~\ref{alg: SDO-IP-OP-Q}. Here, both the generated optimal and interior solutions have general structure by using inverse of eigenvalue decomposition and at a high level the overall scheme can be viewed as a combination of Algorithms~\ref{alg: SDO-IP-OP} and~\ref{alg: SDO-OP-Q}.

\begin{algorithm}[H]

\caption{Generating SDO problems with specific interior and optimal solutions} \label{alg: SDO-IP-OP-Q}
\begin{algorithmic}[1]
\State Choose dimensions $m,n$ with $m<\frac{n(n+1)}{2}$ (the dimensions of generated SDOP: $m+1,n+1$)
\State Choose $n_B,n_N\in[n]$ where $n_B+n_N\leq n$
\State Define sets $$B=\{1,\dots,n_B\}, T=\{n_B+1,\dots,n-n_N\}, \text{ and } N=\{n-n_N+1,\dots,n\}$$
\State Generate $\sigma_i>0$ for $i\in B$ and build $\Sigma_B=\text{diag}(\sigma)$
\State Generate $\lambda_i>0$ for $i\in N$ and build $\Lambda_N=\text{diag}(\lambda)$
\State Generate orthonormal matrix $\hat{Q}_{n\times n}$
\State Build $\hat{X}=\hat{Q}\begin{pmatrix}
\Sigma_B &0& 0\\
0 & 0&0\\
0&0&0
\end{pmatrix}\hat{Q}^{\top}$ and  $\hat{S}=\hat{Q}\begin{pmatrix}
0 & 0&0\\
0 & 0&0\\
0 & 0&\Lambda_N
\end{pmatrix}\hat{Q}^{\top}$
\State Generate $\hat{y} \in \mathbb{R}^{m}$ and $\hat{A}_i \in \mathcal{S}^n$ for $i \in [m]$ 

\State Calculate $\hat{b}_i= \hat{A}_i \bullet X^*$ for $i \in [m]$ and $\hat{C}=\sum_{i=1}^{m}\hat{y}_i\hat{A}_i+\hat{S}$

\State Build $Q_{(n+1)\times(n+1)}=\begin{pmatrix}
\hat{Q}& 0\\
0 & 1
\end{pmatrix}$
\State Generate positive diagonal matrix $\Sigma_B^0$, $\Sigma_T^0$, $\Sigma_N^0$, and number $\sigma_{n+1}^0>0$ 
\State Build $X^*=Q\begin{pmatrix}
\Sigma_B&0& 0&0\\
0&0&0&0\\
0 &0& 0&0\\
0&0&0&0
\end{pmatrix}Q^{\top}$ and $X^0=Q\begin{pmatrix}
\Sigma_B^0&0& 0&0\\
0&\Sigma_T^0&0&0\\
0 &0& \Sigma_N^0 &0\\
0&0&0&\sigma_{n+1}^0
\end{pmatrix}Q^{\top}$
\State Generate positive diagonal matrix $\Lambda_B^0$, $\Lambda_T^0$, and $\Lambda_N^0$ 
\State Calculate $\delta=\sum_{i\in B}(\sigma_i-\sigma_i^0)\lambda_i^0+\sum_{i\in T}\sigma_i^0\lambda_i^0+\sum_{i\in N}\sigma_i^0(\lambda_i^0-\lambda_i)$\label{alg10s13}
\State Generate $\lambda^0_{n+1}>(\frac{-\delta}{\sigma_{n+1}^0})^+$, and calculate $\lambda_{n+1}=\frac{\delta}{\sigma_{n+1}^0}+\lambda^0_{n+1}$ \label{alg10s14}
\State Build $S^*=Q\begin{pmatrix}
0&0& 0&0\\
0&0&0&0\\
0&0& \Lambda_N&0\\
0&0&0&\lambda_{n+1}
\end{pmatrix}Q^{\top}$ and $S^0=Q\begin{pmatrix}
\Lambda_B^0&0& 0&0\\
0&\Lambda_T^0&0&0\\
0&0& \Lambda_N^0&0\\
0&0&0&\lambda_{n+1}^0
\end{pmatrix}Q^{\top}$
\State Generate $y^0\in \mathbb{R}^{m+1}$ randomly such that $y^0_{m+1}\not=0$ 
and build $y^*_{(m+1)}=\begin{pmatrix}
\hat{y}\\
0 
\end{pmatrix}$ 
\State Calculate $\alpha_i=\frac{1}{\sigma_{n+1}^0}\trace{\left(\hat{A}_i\hat{Q}\begin{pmatrix}
\Sigma_B-\Sigma_B^0&0& 0\\
0&-\Sigma_T^0&0\\
0 &0& -\Sigma_N^0 
\end{pmatrix}\hat{Q}^{\top}\right)}$ for $i \in [m]$
\State Build $A_i= \begin{pmatrix}
\hat{A}_i& 0\\
0 & \alpha_i
\end{pmatrix}$  for $i \in [m]$
\State $A_{m+1}= \frac{1}{y^0_{m+1}}\left(\sum_{i=1}^{m}(\yhat_i-y^0_i)\Ahat_i+Q\begin{pmatrix}
-\Lambda_B^0&0& 0&0\\
0&-\Lambda_T^0&0&0\\
0&0&\Lambda_T -\Lambda_T^0&0\\
0&0&0&\Lambda_{n+1} -\Lambda_{n+1}^0
\end{pmatrix}Q^{\top} \right)$
\State Calculate $C=\begin{pmatrix}
\hat{C}& 0\\
0 & \sum_{i=1}^{m}\hat{y}_i\alpha_i+\lambda_{m+1}
\end{pmatrix}$

\State Calculate  $b_i=\hat{b}_i$ for $i\in [m]$ and $b_{m+1}=\trace{(A_{m+1}X^*)}$
\State \textbf{Return }{SDOP $(A_i, b, C)$ with optimal solution $(X^*,y^*,S^*)$ and interior solution $(X^0,y^0,S^0)$}  

\end{algorithmic}
\end{algorithm}
For the generated solutions $(X^0,y^0,S^0)$ and  $(X^*,y^*,S^*)$, Steps \ref{alg10s13} and \ref{alg10s14} ensure that
\begin{equation}
    \sum_{i\in B}(\sigma_i^0-\sigma_i)\lambda_i^0+\sum_{i\in T}\sigma_i^0\lambda_i^0+\sum_{i\in N}\sigma_i^0(\lambda_i^0-\lambda_i)+\sigma_{n+1}^0(\lambda_{n+1}^0-\lambda_{n+1})=0,
\end{equation}
Consequently, the orthogonality condition \eqref{con2} is satisfied. 
Similar to the block-diagonal case, Theorem~\ref{theo: SDO-IP-OP-Q} establishes that the generated SDO problem and its optimal and interior solutions are correct.
\begin{theorem}\label{theo: SDO-IP-OP-Q}
Let $(X^0, y^0, S^0)$ and $(X^*, y^*, S^*)$ be solutions generated by  Algorithm~\ref{alg: SDO-IP-OP-Q}. Then,
\begin{subequations}
\begin{align*}
X^*\succeq0,~S^*\succeq0,~X^0\succ0,~S^0&\succ0,\\
    X^* \bullet S^*&=0,\\
    A_i \bullet X^*&=b_i,\\
    \sum_{i=1}^{n}y^*_iA^{\top}_i+S^*&=C,\\
    A_i \bullet X^0 &=b_i,\\
    \sum_{i=1}^{n}y^0_iA^{\top}_i+S^0&=C.
\end{align*}
\end{subequations}
\end{theorem}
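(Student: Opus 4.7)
The plan is to follow the template of Theorem~\ref{theo: SDO-IP-OP} (and the LO counterpart Theorem~\ref{t1}) very closely, swapping the explicit block-diagonal structure for the orthonormal-conjugation structure used in Algorithm~\ref{alg: SDO-IP-OP-Q}. The unifying observation is that all four generated matrices $X^*, S^*, X^0, S^0$ share the form $Q D Q^{\top}$ with a common orthonormal $Q$ and some diagonal $D$, so eigenvalues, matrix products, and trace inner products all reduce to manipulations of diagonal entries.

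I would first dispose of the easier claims. Semidefiniteness of $X^*, S^*$ and positive definiteness of $X^0, S^0$ follow immediately from orthonormal conjugation of diagonal matrices with the prescribed sign pattern. Complementarity $X^* \bullet S^* = 0$ holds because $X^* S^* = Q \Sigma \Lambda Q^{\top}$ and the supports of the diagonals $\Sigma$ and $\Lambda$ are disjoint. Primal feasibility $A_i \bullet X^* = b_i$ is immediate for $i \in [m]$ from the shared block-diagonal pattern $A_i = \mathrm{diag}(\hat{A}_i,\alpha_i)$ and $X^* = \mathrm{diag}(\hat{X},0)$, and it holds for $i=m+1$ by the very definition of $b_{m+1}$. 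Dual feasibility $\sum y^*_i A_i + S^* = C$ reduces, via $y^*_{m+1}=0$, to the already-verified optimal-dual identity from Algorithm~\ref{alg: SDO-OP-Q} in the top-left block, while the $(n+1,n+1)$ entry matches $C_{n+1,n+1}$ by construction.

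The real work concerns the interior solution. I would first establish an orthogonality lemma in the spirit of Lemma~\ref{lem: orthogonality of SDO}: since all four matrices are $Q$-conjugates of diagonals with the same $Q$, the inner product $(X^0 - X^*) \bullet (S^0 - S^*)$ collapses to a sum over the diagonal indices in $B, T, N$ and $\{n+1\}$, and Steps~\ref{alg10s13}--\ref{alg10s14} are calibrated precisely so this sum vanishes. With orthogonality in hand, primal feasibility $A_i \bullet X^0 = b_i$ for $i \in [m]$ follows because $\alpha_i$ is defined to cancel exactly the discrepancy between $\hat{A}_i \bullet \hat{X}$ and $\hat{A}_i$ paired with the top-left $n \times n$ block of $X^0$. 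For $i = m+1$, the sum $\sum (\hat{y}_i - y^0_i) A_i \bullet (X^0 - X^*)$ vanishes termwise by what has just been proved, and the remaining $Q(\cdot)Q^\top$ piece of $A_{m+1}$ paired with $X^0 - X^*$ produces precisely the orthogonality expression, hence equals zero.

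The main obstacle is dual feasibility of the interior solution, $\sum_{i=1}^{m+1} y^0_i A_i + S^0 = C$. I would split the sum at $i = m+1$, use the definition of $A_{m+1}$ to recombine $\sum_{i=1}^{m} y^0_i A_i + y^0_{m+1} A_{m+1}$ into $\sum \hat{y}_i A_i$ plus the $Q(\cdot)Q^{\top}$ correction with diagonal $(-\Lambda^0_B, -\Lambda^0_T, \Lambda_N - \Lambda^0_N, \lambda_{n+1} - \lambda^0_{n+1})$, and then add $S^0 = Q\,\mathrm{diag}(\Lambda^0_B, \Lambda^0_T, \Lambda^0_N, \lambda^0_{n+1})Q^{\top}$ so that the $\Lambda^0$ parts cancel exactly. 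What remains is $\sum \hat{y}_i A_i + S^*$, which equals $C$ from the already-verified optimal-dual identity. The $(n+1,n+1)$ entry is the most delicate step, consuming the orthogonality-induced choice of $\lambda_{n+1}$ from Step~\ref{alg10s14}; the algebra there should mirror the final multi-line display in the proof of Theorem~\ref{t1}, with $\delta/x^0_{n+1}$ replaced by $\delta/\sigma^0_{n+1}$. Keeping the embeddings of $n\times n$ blocks into $(n+1)\times(n+1)$ matrices consistent throughout is the main source of bookkeeping friction, but no conceptual ingredient beyond the orthogonality lemma is required.
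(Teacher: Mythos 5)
Your proposal is correct and takes essentially the same route as the paper, whose own proof is a one-line reference to the argument of Theorem~\ref{theo: SDO-IP-OP} together with Condition~\eqref{con2}; you have simply written out that verification in full, with the common-$Q$ conjugation playing the role of the block structure. One minor misplacement: the orthogonality condition (the calibrated choice of $\lambda_{n+1}$ in Step~\ref{alg10s14}) is consumed in showing $A_{m+1}\bullet(X^0-X^*)=0$, exactly as in your primal-feasibility paragraph, whereas the $(n+1,n+1)$ entry of the interior dual-feasibility identity then follows purely from the definitions of $A_{m+1}$ and $C$ and needs no further orthogonality argument.
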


\begin{proof}
Analogous to the proof of Theorem~\ref{theo: SDO-IP-OP} based on the steps of Algorithm~\ref{alg: SDO-IP-OP-Q} and Condition \eqref{con2}.
\end{proof}
\subsubsection{SDOPs with Predefined Interior and Maximally Complementary Solutions (General Structure)}\label{sec: SDOINTOPT3}
To have a predetermined optimal partition, we develop Algorithm~\ref{alg: SDO-IP-OP-Q-M} to generate SDOPs with specific interior and maximally complementary solutions as follows.
\begin{algorithm}[H]

\caption{Generating SDO problems with specific interior and maximally complementary solutions} \label{alg: SDO-IP-OP-Q-M}
\begin{algorithmic}[1]
\State Choose dimensions $m,n$ with $m<\frac{n(n+1)}{2}$ (the dimensions of generated SDOP: $m+1,n+1$)
\State Choose $n_B,n_N\in[n]$ where $n_B+n_N\leq n$
\State Define sets $$B=\{1,\dots,n_B\}, T=\{n_B+1,\dots,n-n_N\}, \text{ and } N=\{n-n_N+1,\dots,n\}$$
\State Generate $\sigma_i>0$ for $i\in B$ and build $\Sigma_B=\text{diag}(\sigma)$
\State Generate $\lambda_i>0$ for $i\in N$ and build $\Lambda_N=\text{diag}(\lambda)$
\State Generate orthonormal matrix $\hat{Q}_{n\times n}$
\State Build $\hat{X}=\hat{Q}\begin{pmatrix}
\Sigma_B &0& 0\\
0 & 0&0\\
0&0&0
\end{pmatrix}\hat{Q}^{\top}$ and  $\hat{S}=\hat{Q}\begin{pmatrix}
0 & 0&0\\
0 & 0&0\\
0 & 0&\Lambda_N
\end{pmatrix}\hat{Q}^{\top}$
\State Generate $A_1 = Q \Gamma Q^{\top}$ such that $\Gamma=\text{diag}(\gamma)$ where $\gamma_B=0$, $\gamma_T>0$, and $\gamma_T\in\Rmbb^q$
\State Generate $\hat{y} \in \mathbb{R}^{m}$ and $\hat{A}_i \in \mathcal{S}^n$ for $i \in \{2,\dots, m\}$ 

\State Calculate $\hat{b}_i=\trace{(\hat{A}_iX^*)}$ for $i \in [m]$ and $\hat{C}=\sum_{i=1}^{m}\hat{y}_i\hat{A}_i+\hat{S}$

\State Build $Q_{(n+1)\times(n+1)}=\begin{pmatrix}
\hat{Q}& 0\\
0 & 1
\end{pmatrix}$
\State Generate positive diagonal matrix $\Sigma_B^0$, $\Sigma_T^0$, $\Sigma_N^0$, and number $\sigma_{n+1}^0>0$  
\State Build $X^*=Q\begin{pmatrix}
\Sigma_B&0& 0&0\\
0&0&0&0\\
0 &0& 0&0\\
0&0&0&0
\end{pmatrix}Q^{\top}$ and $X^0=Q\begin{pmatrix}
\Sigma_B^0&0& 0&0\\
0&\Sigma_T^0&0&0\\
0 &0& \Sigma_N^0 &0\\
0&0&0&\sigma_{n+1}^0
\end{pmatrix}Q^{\top}$
\State Generate positive diagonal matrix $\Lambda_B^0$, $\Lambda_T^0$, and $\Lambda_N^0$ 
\State Calculate $\delta=\sum_{i\in B}(\sigma_i-\sigma_i^0)\lambda_i^0+\sum_{i\in T}\sigma_i^0\lambda_i^0+\sum_{i\in N}\sigma_i^0(\lambda_i^0-\lambda_i)$
\State Generate $\lambda^0_{n+1}>(\frac{-\delta}{\sigma_{n+1}^0})^+$, and calculate $\lambda_{n+1}=\frac{\delta}{\sigma_{n+1}^0}+\lambda^0_{n+1}$ 
\State Build $S^*=Q\begin{pmatrix}
0&0& 0&0\\
0&0&0&0\\
0&0& \Lambda_N&0\\
0&0&0&\lambda_{n+1}
\end{pmatrix}Q^{\top}$ and $S^0=Q\begin{pmatrix}
\Lambda_B^0&0& 0&0\\
0&\Lambda_T^0&0&0\\
0&0& \Lambda_N^0&0\\
0&0&0&\lambda_{n+1}^0
\end{pmatrix}Q^{\top}$
\State Generate $y^0\in \mathbb{R}^{m+1}$ such that $y^0_{m+1}\not=0$ 
and build $y^*_{(m+1)}=\begin{pmatrix}
\hat{y}\\
0 
\end{pmatrix}$ 
\State Calculate $\alpha_i=\frac{1}{\sigma_{n+1}^0}\trace{\left(\hat{A}_i\hat{Q}\begin{pmatrix}
\Sigma_B-\Sigma_B^0&0& 0\\
0&-\Sigma_T^0&0\\
0 &0& -\Sigma_N^0 
\end{pmatrix}\hat{Q}^{\top}\right)}$ for $i \in [m]$
\State Build $A_i= \begin{pmatrix}
\hat{A}_i& 0\\
0 & \alpha_i
\end{pmatrix}$  for $i \in [m]$
\State $A_{m+1}= \frac{1}{y^0_{m+1}}\left(\sum_{i=1}^{m}(\yhat_i-y^0_i)\Ahat_i+Q\begin{pmatrix}
-\Lambda_B^0&0& 0&0\\
0&-\Lambda_T^0&0&0\\
0&0&\Lambda_T -\Lambda_T^0&0\\
0&0&0&\Lambda_{n+1} -\Lambda_{n+1}^0
\end{pmatrix}Q^{\top}\right)$
\State Calculate $C=\begin{pmatrix}
\hat{C}& 0\\
0 & \sum_{i=1}^{m}\hat{y}_i\alpha_i+\lambda_{m+1}
\end{pmatrix}$

\State Calculate  $b_i=\hat{b}_i$ for $i\in [m]$ and $b_{m+1}=\trace{(A_{m+1}X^*)}$
\State \textbf{Return }{SDOP $(A_1,\dots,A_m, b, C)$ with optimal solution $(X^*,y^*,S^*)$ and interior solution $(X^0,y^0,S^0)$}  

\end{algorithmic}
\end{algorithm}
\begin{theorem}\label{theo: maxcomSDOeigoptint}
For Algorithm~\ref{alg: SDO-IP-OP-Q-M},  solution $(X^*,y^*,S^*)$ is the maximally complementary optimal solution of the generated problem $(A_1,\dots,A_m, b, C)$.
\end{theorem}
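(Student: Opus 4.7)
The strategy is to combine Theorems~\ref{theo: SDO-IP-OP-Q} and~\ref{theo: maxcomSDOeigopt}. First I would observe that Algorithm~\ref{alg: SDO-IP-OP-Q-M} reproduces Algorithm~\ref{alg: SDO-IP-OP-Q} step-by-step apart from the extra restriction in Step~8 that fixes the spectral structure of $A_1$ as $\hat{Q}\Gamma \hat{Q}^{\top}$ with $\gamma_B=0$ and $\gamma_T>0$. Since the algebraic identities underlying primal and dual feasibility, the complementarity $X^*\bullet S^*=0$, and the orthogonality-enforcing choice of $\lambda_{n+1}$ in Steps~15--16 are insensitive to this additional constraint, Theorem~\ref{theo: SDO-IP-OP-Q} transfers verbatim and establishes that $(X^*,y^*,S^*)$ is optimal and $(X^0,y^0,S^0)$ is a strictly feasible interior solution of the generated SDOP.

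Second, I would promote the contradiction argument of Theorem~\ref{theo: maxcomSDOeigopt} to the $(n+1)$-dimensional setting. Using the block-diagonal embedding $Q=\begin{pmatrix}\hat{Q}&0\\0&1\end{pmatrix}$ built in Step~11, both $X^*$ and $S^*$ are simultaneously diagonalized, and the extended optimal partition becomes $(B,\,T,\,N\cup\{n+1\})$ since the new coordinate lies in $\Rcal(S^*)$. In this basis, the final $A_1$ inherits the form $Q\tilde{\Gamma}Q^{\top}$ with $\tilde{\gamma}_B=0$ and $\tilde{\gamma}_T>0$ (the entry $\alpha_1$ only appears in the $N\cup\{n+1\}$ block). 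Given any maximally complementary solution $(\Xtilde,\ytilde,\Stilde)$, the inclusion $\Rcal(X^*)\subseteq \Rcal(\Xtilde)\subseteq \Rcal(S^*)^{\perp}$ forces $\Xtilde$ to be supported on $B\cup T$ in the $Q$-basis, and primal feasibility against $A_1$ then reduces to $\tilde{\gamma}_T\bullet D_T=0$, yielding $D_T=0$ and hence $\Rcal(\Xtilde)=\Rcal(X^*)$.

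The main obstacle will be the dual-side conclusion $\ytilde=y^*$ (and therefore $\Stilde=S^*$). Following Theorem~\ref{theo: maxcomSDOeigopt}, subtracting the two dual feasibility equations and right-multiplying by $Q_B$ gives $\sum_{i=1}^{m+1}(y^*_i-\ytilde_i)\,A_i Q_B=0$, since both $S^*Q_B=0$ and $\Stilde Q_B=0$ follow from their ranges being contained in the span of the non-$B$ columns of $Q$. Extracting $y^*=\ytilde$ then requires the matrices $\{A_i Q_B\}_{i=1}^{m+1}$ to be linearly independent. I would verify this by exploiting the block structure of the $A_i$: because $Q_B=\begin{pmatrix}\hat{Q}_B\\0\end{pmatrix}$, the products reduce (modulo the trailing zero row) to $\hat{A}_i\hat{Q}_B$ for $i\in[m]$, so independence among these follows generically from the random choice of $\hat{A}_i$, while $A_{m+1}Q_B$ picks up the extra contribution $-\hat{Q}_B\Lambda_B^0/y^0_{m+1}$ from Step~21 and must be checked not to lie in the span of the preceding matrices—again a generic condition on the randomly generated data. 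Once this linear-independence condition is secured, $\Stilde=S^*$ follows, maximal complementarity of $(X^*,y^*,S^*)$ is established, and the extended optimal partition coincides with $(B,\,T,\,N\cup\{n+1\})$.
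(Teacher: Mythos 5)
Your proposal is correct and follows essentially the same route as the paper: the paper's proof simply asserts that the contradiction argument of Theorem~\ref{theo: maxcomSDOeigopt} carries over because the added eigenvalue $\gamma_{n+1}=\alpha_1$ and the added column $Q_{n+1}=(0,\dots,0,1)^{\top}$ fall into the unrestricted partition $N$, which is exactly the observation driving your second paragraph. You are in fact more careful than the paper on the one delicate point---the linear independence of $\{A_iQ_B\}_{i=1}^{m+1}$, which Algorithm~\ref{alg: SDO-IP-OP-Q-M} (unlike Algorithm~\ref{alg: SDO-OP-Q-M}) never explicitly imposes and the paper's one-line proof leaves implicit; your genericity argument for $A_{m+1}Q_B$ closes that gap.
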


\begin{proof}
The proof closely follows the proof of Theorem \ref{theo: maxcomSDOeigopt}; the only difference being that we expanded the matrices $A_i$ by adding a row and column. For constructing matrix $A_1$, the added eigenvalue $\gamma_{n+1}=\alpha_1$ and $Q_{n+1}=(0,0,0,\dots,0,1)^{\top}$ belong to partition $N$ where we do not have any restriction. Thus, adapting the proof of Theorem \ref{theo: maxcomSDOeigopt} to this theorem is straightforward.
\end{proof}
Among all proposed SDOP generators, Algorithm~\ref{alg: SDO-IP-OP-Q-M} provides the most sophisticated SDOPs, with maximally complementary and interior solutions in a general manner and gives opportunities for altering characteristics of an optimal solution, optimal partition, matrices $A_i$, $C$, and vector $b$ to study the performance of solution methods in a detailed and sophisticated analysis. However, this algorithm requires much more complicated computation than the other proposed generators. 
\section{Second-Order Cone Optimization}\label{sec: SOCO}
Before concluding, we adapt our techniques for LO and SDO to linear optimization problems over second order (or \textit{Lorentz}) cones. 

\subsection{Second Order Cone Optimization Problems}\label{sec: SOCOP}
A second-order cone is defined as follows
$$\left\{ (x_1, x_2, \dots, x_n) \in \R{n} : x_1^2 - \sum_{i=2}^n x_i^2 \geq 0, x_1 \geq 0 \right\}.$$
Observe that the above definition implies that $(x_1, x_2, \dots, x_n)$ is a second-order cone if and only if the matrix 
$$ \begin{pmatrix} x_1 & x^{\top}_{2:n} \\
x_{2:n} & x_1 I_{n-1} \end{pmatrix}$$
is positive semidefinite, where $x_{2:n}^{\top} \equiv (x_2, x_3, \dots, x_n)$ and $I_{n-1}$ is the identity matrix of order $n-1$. Accordingly, a primal or dual second-order cone optimization problem (SOCOP) may be interpreted as a special case of SDO \cite{sampourmahani2023semidefinite}.

In SOCO problems, we seek to minimize a linear objective function over a feasible region which is defined by the intersection of an affine space and the Cartesian product of $p$ second-order cones of dimension $n_i$, which is defined as 
$$\Lmbb^{n}=\Lcal^{n_1}\times\dots\times \Lcal^{n_p},\quad n=\sum_{i=1}^{p}n_i,$$
where
$$\Lcal^{n_i}=\{x^i=(x_1^i,\dots,x_{n_i}^i)^{\top}\in\mathbb{R}^{n_i}:x_1^i\geq\|x^i_{2:n_i}\|\},\quad i\in [p].$$
It is clear that LOPs are a special case of SOCOPs, where $n_i=1$ for $i \in [p]$. 

The primal and dual SOCO problems in standard form are represented as
\begin{align*}\label{e:SDO}
     z^P_{SOCO} &= \inf_x \{c^{\top}x :Ax = b~,~x\in\Lmbb^{n}\} ,\\
    z^D_{SOCO} &=  \sup_{(y, s)} \{ b^{\top} y:A^{\top}y + s = c~,~s\in\Lmbb^{n}  \},
\end{align*}
where $b\in\mathbb{R}^m$, $A=(A_1,\dots,A_p)$, $x=(x^1;\dots;x^p)$, $s=(s^1;\dots;s^p)$, and $c=(c^1;\dots;c^p)$, in which $A_i\in \mathbb{R}^{m\times n_i}$, $s^i\in\mathbb{R}^{n_i}$, and $c^i\in \mathbb{R}^{n_i}$ for $i\in [p]$. The set of primal and dual feasible solutions is defined as
$$\Pcal \Dcal_{SOCO}=\{(x,y,s)\in \Lmbb^{n}\times\mathbb{R}^{m}\times \Lmbb^{n}:Ax=b,A^{\top}y+s=c\}.$$
Let $$\Lcal_{+}^{n_i}=\{x^i\in\Lcal^{n_i}:x_1^i>\|x^i_{2:n_i}\|\},\quad i\in [p],$$
then we can define the set of primal and dual interior feasible solutions as
$$\Pcal \Dcal^0_{SOCO}=\{(x,y,s)\in \Lmbb^{n}_{+}\times\mathbb{R}^{m}\times \Lmbb^{n}_{+}:Ax=b,A^{\top}y+s=c\}.$$

Just as in LO and SDO, it is standard practice to assume the existence of an interior feasible primal-dual solution. With the existence of a strictly feasible solution, it follows that the Interior Point Condition (IPC) is satisfied \citep{mohammad2019conic}, guaranteeing that $z^P_{SOCO}=z^D_{SOCO}$ and the primal-dual optimal set
\begin{align*}
    \Pcal\Dcal_{SOCO}^* &= \left\{(x,y,s) \in \Pcal\Dcal_{SOCO}  ~:~  c^{\top}x = z^P_{SOCO} =  b^{T} y = z^D_{SOCO} \right\},
\end{align*}
is nonempty and bounded. Therefore, there exists an optimal primal-dual pair with zero duality gap. That is, for optimal solutions
$x^*$ and $(y^*, s^*)$, we have
\begin{equation}
   x^*\circ s^*=(x^1\circ s^1 ,\dots , x^p\circ s^p)  = 0,
\end{equation}
where the Jordan product ``$\circ$" is defined as
\begin{equation}
   x^i\circ s^i = \begin{pmatrix}(x^i)^{\top}s^i\\
   x_1^is_{2:n_i}^i+s_1^ix_{2:n_i}^i
   \end{pmatrix}.
\end{equation}
An optimal solution $(x^*, y^*, s^*)$ is called maximally complementary if $x^* \in \ri({\Pcal}_{SOCO}^*)$ and $(y^*; s^*) \in \ri({\Dcal}_{SOCO}^*)$. Further, $(x^*, y^*, s^*)$ is called strictly complementary if $$x^* + s^* \in \Lmbb^{n}_{+}.$$
\subsection{Instance Generators for SOCOPs}\label{sec: SOCOPGen}
Motivated by our work on LOP and SDOP  generators, we are further interested in applying these ideas to generate SOCO problems. Since SOCO can be interpreted as a special case of SDO, Sampourmahani et al.~\cite{sampourmahani2023semidefinite} studied mappings between SDOPs and SOCOPs and their optimal partitions. It is straightforward to develop SOCOP generators using the proposed SDOP generators augmented with the appropriate mapping. However, that route is not efficient, and we alternatively propose several SOCOP generators without using their SDO representation.

\subsubsection{SOCOPs with a Predefined Interior Solution}\label{sec: SOCOPINT}
Generating SOCOPs with an interior solution also ensures that the problem has an optimal solution with zero duality gap. Algorithm~\ref{alg: SOCO-IP} is a modification of Algorithms~\ref{alg: LO-IP} and~\ref{alg: SDO-IP} to generate SOCOPs with specific interior solutions.
\begin{algorithm}[H] 
\caption{Generating SOCO problems with a specific interior solution} \label{alg: SOCO-IP}
\begin{algorithmic}[1]
\State Choose dimensions $m<n$
\State Choose $n_1,\dots, n_p$ such that $n=n_1+\dots+n_p$
\State Generate $(x^0,s^0)$ such that $x^0\in\Lmbb_{+}^{n}$ and $s^0\in\Lmbb_{+}^{n}$ 
\State Generate $A\in\mathbb{R}^{m\times n}$ 
\State Generate $y^0 \in \mathbb{R}^{m}$ 
\State Calculate $b=Ax^0$ and $c=A^{\top} y^0+s^0$
\State \textbf{Return }{SOCOP $(A, b, c)$ with interior solution
$(x^0,y^0,s^0)$}
\end{algorithmic}
\end{algorithm}
To have an interior solution $x^0$, we must generate $(x^0)^i$ for $i=1,\dots, p$, such that $((x^0)^i_2,\dots,(x^0)^i_{n_i})\in\mathbb{R}^{n_i-1}$ and $(x^0)^i_1>\|(x^0)^i_{2:n_i}\|$. One way to generate such a solution is to generate $(x^0)^i\in\mathbb{R}^{n_i}$, and update it using the rule $$(x^0)^i_1=\|(x^0)^i_{2:n_i}\|+\|(x^0)^i_1\|.$$ 
 Similar to LO, if the matrix $A$ is generated randomly, then the probability of that all rows of $A$ are linearly independent is one. In addition, the user can generate a desired matrix $A$ with specific characteristics such as sparsity, condition number, and norm.
\subsubsection{SOCOPs with a Predefined Optimal Solution}\label{sec: SOCOPOPT}
For SOCOPs, the optimal partition is a bit more complicated than for LO and SDO. The index set $[p]$ is  partitioned to sets $(\Bcal, \Ncal,\Rcal, \Tcal_1,\Tcal_2,\Tcal_3)$ defined as
\begin{align*}
    \Bcal &\coloneqq \{  i  :  x_1^i > \|x_{2:n_i}^i\|_2, \text{ for some } x \in \Pcal_{SOCO}^* \},\\
    \Ncal &\coloneqq \{  i  :  x_1^i > \|x_{2:n_i}^i\|_2, \text{ for some } x \in \Dcal_{SOCO}^* \},\\
    \Rcal &\coloneqq \{  i  :  x_1^i = \|x_{2:n_i}^i\|_2 > 0, x_1^i = \|x_{2:n_i}^i\|_2 > 0,  \text{ for some } (x, y, x) \in \Pcal_{SOCO}^* \times \Dcal_{SOCO}^* \},\\
    \Tcal_1 &\coloneqq \{ i  :  x^i = x^i = 0, \text{ for all } (x, y, x) \in \Pcal_{SOCO}^* \times \Dcal_{SOCO}^*\},\\
    \Tcal_2 &\coloneqq \{ i  :  x^i = 0, \text{ for all } (y, x) \in \Dcal_{SOCO}^*, x_1^i = \|x_{2:n_i}^i\|_2 > 0, \text{ for some } x \in \Pcal_{SOCO}^*\},\\
    \Tcal_3 &\coloneqq \{ i  :  x^i = 0, \text{ for all } x \in \Pcal_{SOCO}^*, x_1^i = \|x_{2:n_i}^i\|_2 > 0, \text{ for some } (y, x) \in \Dcal_{SOCO}^*\}.
\end{align*}
For further discussion regarding the optimal partition in SOCOPs, we refer the reader to \citep{terlaky2014identification}. From here, we can develop Algorithm~\ref{alg: SOCO-OP} which is a generalization of Algorithm~\ref{alg: LO-OP} for generating random SOCOPs with specific optimal solutions.
\begin{algorithm}[H] 
\caption{Generating SOCO problems with a specific optimal solution} \label{alg: SOCO-OP}
\begin{algorithmic}[1]
\State Choose dimensions $m<n$
\State Choose $n_1,\dots, n_p$ such that $n=n_1+\dots+n_p$
\State Partition the index set $[p]$ to $(B, N, R, T_1, T_2, T_3)$  
\State For $i\in B$, $(s^*)^i=0$ and generate $(x^*)^i\in \mathbb{R}^{n_i}$  such $(x^*)^i_{1}>\|(x^*)^i_{2:n_i}\|$  
\State For $i\in N$, $(x^*)^i=0$ and generate $(s^*)^i\in \mathbb{R}^{n_i}$ such $(s^*)^i_{1}>\|(s^*)^i_{2:n_i}\|$  
\State For $i\in T_1$, $(s^*)^i=0$ and $(x^*)^i=0$
\State For $i\in T_2$, $(s^*)^i=0$ and generate $(x^*)^i\in \mathbb{R}^{n_i}$ such $(x^*)^i_{1}=\|(x^*)^i_{2:n_i}\|>0$  
\State For $i\in T_3$, $(x^*)^i=0$ and generate $(s^*)^i\in \mathbb{R}^{n_i}$ such $(s^*)^i_{1}=\|(s^*)^i_{2:n_i}\|>0$  
\State For $i\in R$, generate $(x^*)^i_{2:n_i}\in\mathbb{R}^{n_i-1}$ and $\delta\in\mathbb{R}$ and build 
$$(x^*)^i=\begin{pmatrix}\|(x^*)^i_{2:n_i}\|\\ (x^*)^i_{2:n_i}\end{pmatrix}\text{, and }(s^*)^i=\delta\begin{pmatrix}\|(x^*)^i_{2:n_i}\|\\ -(x^*)^i_{2:n_i}\end{pmatrix}$$
\State Generate $y^*\in \mathbb{R}^m$ 
\State Generate $A\in \mathbb{R}^{m\times n}$ 
\State Calculate $b=A x^*$ and $c=A^{\top} y^*+s^*$
\State \textbf{Return }{SOCOP $(A, b, c)$ with optimal solution
$(x^*,y^*,s^*)$}

\end{algorithmic}
\end{algorithm}
\begin{remark}
Algorithm \ref{alg: SOCO-OP} provides a SOCOP with an optimal solution, and that optimal solution may not be maximally complementary. Thus, the optimal partition $(\Bcal,\Ncal,\Rcal,\Tcal_1,\Tcal_2,\Tcal_3)$ of the generated problem may differ from $(B,N,R,T_1,T_2,T_3)$, and we only have
\begin{equation*}
    B\subseteq\Bcal,\ N\subseteq\Ncal,\ R\subseteq\Rcal,\  \Tcal_1\subseteq T_1,\  \Tcal_2\subseteq T_2 \cap T_1, \text{ and } \Tcal_3\subseteq T_3 \cap T_1.
\end{equation*}
\end{remark}
Similar to LOPs and SDOPs generators, one can generate $A$ random in a way to have specific characteristics. The norm and properties of $(x^*,y^*,s^*)$ are controllable directly. Also, the norm of $(b,c)$ can be predetermined by scaling $(x^*,y^*,s^*)$ appropriately and carefully, since determining the norm of all parameters and the optimal solution simultaneously is possible if the equations in line 11 of Algorithm~\ref{alg: SOCO-OP} hold. It is easy to see that $(x^*,y^*,s^*)$ is optimal since $x^*\circ s^*=0$ and it is feasible by construction. In the next section, we discuss how to generate a SOCOP with a maximally complementary solution.
\subsubsection{SOCOPs with a Predefined Maximally Complementary  Solution}\label{sec: SOCOPMC}
Since the optimal partition can affect the performance of algorithms to solve SOCOPs similar to SDO, we are interested in generating problems with predetermined optimal partitions. To do so, we adapt our instance generator for SDOPs with maximally complementary solution to SOCO in Algorithm~\ref{alg: SOCO-MC}. Let $A^p_{i,j}$ be the element in row $i$ and column $j$ of part (columns) of A that correspond to cone $p$. We also use the superscript to show the partition, e.g., $A^B$ denotes the columns of A which correspond to partition $B$.
\begin{algorithm}[H] 
\caption{Generating SOCOPs with a specific maximally complementary solution} \label{alg: SOCO-MC}
\begin{algorithmic}[1]
\State Choose dimensions $m<n$
\State Choose $n_1,\dots, n_p$ such that $n=n_1+\dots+n_p$
\State Partition the index set $[p]$ to $(B, N, R, T_1, T_2, T_3)$  such that $$|T_2|+1<m\leq|B|+|R|+|T_2|$$
\State For $i\in B$, $(s^*)^i=0$ and generate $(x^*)^i\in \mathbb{R}^{n_i}$  such $(x^*)^i_{1}>\|(x^*)^i_{2:n_i}\|$  
\State For $i\in N$, $(x^*)^i=0$ and generate $(s^*)^i\in \mathbb{R}^{n_i}$ such $(s^*)^i_{1}>\|(s^*)^i_{2:n_i}\|$  
\State For $i\in T_1$, $(s^*)^i=0$ and $(x^*)^i=0$
\State For $i\in T_2$, $(s^*)^i=0$ and generate $(x^*)^i\in \mathbb{R}^{n_i}$ such $(x^*)^i_{1}=\|(x^*)^i_{2:n_i}\|>0$  
\State For $i\in T_3$, $(x^*)^i=0$ and generate $(s^*)^i\in \mathbb{R}^{n_i}$ such $(s^*)^i_{1}=\|(s^*)^i_{2:n_i}\|>0$  
\State For $i\in R$, generate $(x^*)^i_{2:n_i}\in\mathbb{R}^{n_i-1}$ and $\delta\in\mathbb{R}$ and build 
$$(x^*)^i=\begin{pmatrix}\|(x^*)^i_{2:n_i}\|\\ (x^*)^i_{2:n_i}\end{pmatrix}\text{, and }(s^*)^i=\delta\begin{pmatrix}\|(x^*)^i_{2:n_i}\|\\ -(x^*)^i_{2:n_i}\end{pmatrix}$$
\State Generate $y^*\in \mathbb{R}^m$ 
\State Generate $A\in \mathbb{R}^{m\times n}$ such that 
\begin{itemize}
    \item First row:
    \begin{align*}
    A_{1,1}^p>0,  A_{1,j}^p=0&\text{ for } j=2,\dots,n_p, \text{ and } p\in T_1\cup T_3\\
      A_{1,j}^p=0&\text{ for } j=1,\dots,n_p, \text{ and } p\in B\cup R \cup T_2\\
      A_{1,j}^p\in \Rmbb &\text{ for } j=1,\dots,n_p, \text{ and } p\in N
\end{align*}
\item Row $2$ to $|T_2|+1$:
\begin{align*}
      A^p_{p,1:n_p}=\begin{bmatrix}
      -1&\frac{(x^{*k}_{2:n_p})^{\top}}{\|x^{*k}_{2:n_p}\|}\end{bmatrix}, A^p_{k,1:n_k}=0&\text{ for all } k\not = q, \text{ and }p\in T_2
\end{align*}
\item The other rows should be generated such that rank$([A^B,A^R,A^{T_2}])=m$.
\end{itemize}

\State Calculate $b=A x^*$ and $c=A^{\top} y^*+s^*$
\State \textbf{Return }{SOCOP $(A, b, c)$ with maximally complementary solution
$(x^*,y^*,s^*)$}

\end{algorithmic}
\end{algorithm}
Compared to Algorithm~\ref{alg: SOCO-OP}, Algorithm~\ref{alg: SOCO-MC} imposes more restrictions on how $A$ is generated.
\begin{theorem}
For any SOCOP $(A,b,c)$ generated by Algorithm~\ref{alg: SOCO-MC}, the generated optimal solution $(x^*,y^*,s^*)$ is maximally complementary. 
\end{theorem}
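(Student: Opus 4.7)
The plan is to adapt the contradiction argument from the proof of Theorem~\ref{theo: maxcomSDOeigopt}. First I would verify that $(x^*, y^*, s^*)$ is feasible and that $x^* \circ s^* = 0$ by construction, hence optimal. Then, assuming for contradiction that some primal-dual optimal $(\tilde{x}, \tilde{y}, \tilde{s})$ yields a strictly finer partition than $(B, N, R, T_1, T_2, T_3)$, the goal is to force $\tilde{x}$ and $\tilde{s}$ to respect exactly this partition, so that $(x^*, y^*, s^*)$ already lies in the relative interior of the optimal set.

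Three ingredients drive the primal side. First, cross-complementarity $\tilde{x} \circ s^* = 0$ immediately forces $\tilde{x}^p = 0$ on $N$ (since $s^{*p}$ is interior there) and constrains $\tilde{x}^p$ to be either zero or boundary and anti-parallel to $s^{*p}$ on $R \cup T_3$. Second, primal feasibility $A\tilde{x} = Ax^* = b$ evaluated at the first row of $A$, combined with $A^p_{1,1} > 0$ on $T_1 \cup T_3$ and the zero pattern on $B \cup R \cup T_2$, collapses to $\sum_{p \in T_1 \cup T_3} A^p_{1,1} \tilde{x}^p_1 = 0$, which forces $\tilde{x}^p_1 = 0$ and hence $\tilde{x}^p = 0$ for every $p \in T_1 \cup T_3$. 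Third, each of the rows indexed $2,\dots,|T_2|+1$ acts as a selector for a single $p \in T_2$ and produces the scalar equation $-\tilde{x}^p_1 + \bigl(x^{*p}_{2:n_p}/\|x^{*p}_{2:n_p}\|\bigr)^{\top} \tilde{x}^p_{2:n_p} = 0$; together with cone membership $\tilde{x}^p_1 \geq \|\tilde{x}^p_{2:n_p}\|$ and Cauchy--Schwarz, this pins $\tilde{x}^p$ to the boundary and to the direction of $x^{*p}$. These three facts rule out $\tilde{x}^p$ being interior for any $p \notin B$, and the reverse inclusion is immediate from $x^{*p}$ being interior for $p \in B$.

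For the dual, I would write $A^{\top}(y^* - \tilde{y}) = \tilde{s} - s^*$ and use cross-complementarity of $\tilde{s}$ with $x^*$: this gives $\tilde{s}^p = 0$ on $B$ (where $x^{*p}$ is interior) and proportionality constraints on $R \cup T_2$ that mirror the primal analysis. Restricting $A^{\top}\Delta y$ to the $B \cup R \cup T_2$ column blocks and invoking the hypothesis $\mathrm{rank}([A^B, A^R, A^{T_2}]) = m$ should then force $\Delta y = 0$, hence $\tilde{y} = y^*$ and $\tilde{s} = s^*$, showing that the generated solution is the (unique up to the free dual degrees on $N \cup T_1 \cup T_3$) maximally complementary pair. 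The main obstacle I anticipate is step~(iii) of the primal argument and its dual mirror: unlike the SDO proof where a strictly positive diagonal $\Gamma_T$ cleanly annihilates the $\mathcal{T}$-block via a single inner product, here one has to simultaneously extract the boundary condition and the correct direction, which requires using the Cauchy--Schwarz equality case rather than a straight linear-algebra argument, and keeping careful track of which rows of $A$ contribute to which partition block on the dual side.
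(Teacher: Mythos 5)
Your proposal follows essentially the same route as the paper's proof: the first row of $A$ (with $b_1=0$) kills $\tilde{x}^p$ on $T_1\cup T_3$, the $|T_2|$ selector rows pin each $T_2$ block to the boundary ray through $x^{*p}$, and a rank argument on the columns supported by $x^*$ forces dual uniqueness. You fill in some details the paper leaves implicit (the cross-complementarity step handling $N$ and $R\cup T_3$, and the Cauchy--Schwarz equality case for the $T_2$ rows), but the argument is the same.
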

\begin{proof}
One can verify that $b_1=0$, and the first row of $A$ enforces any optimal solution $\Bar{x}$ to satisfy
$$\Bar{x}^p=0 \text{ for all } p\in T_1\cup T_3.$$
From constraint $2$ to $|T_2|+1$, we add a constraint for each cone $p$ in partition $T_2$ in which coefficients are zero for all variables except for variables in cone $p$. Since the corresponding right-hand side is zero and the coefficients are the normal vector to the cone $p$ at the point $x^*$, all feasible solutions must lie on the ray which is on the boundary of the cone $p$ and passing through the point $x^*$. Thus, for any optimal solution $\Bar{x}$, we have 
$$\Bar{x}^p_1=\|x^{p}_{2:n_p}\| \text{ for all } p\in T_2.$$

Up to this point, we have shown that $x^*\in ri(\Pcal^*)$, and the last part of the proof is to establish that the dual problem has a unique optimal solution $(y^*,s^*)$. To prove it, let assume that it has another optimal solution $(\bar{y},\bar{s})$, and $X^*=\text{diag}(x^*)$. Then, we have
$$X^*A^{\top}(\ybar-y^*)=-X^*(\sbar-s^*)=0.$$
Since $A$ generated in a way that the rank of $X^*A^{\top}$ is $m$, we have $\ybar=y^*$. We can conclude that $(x^*, y^*,s^*)$ is a maximal complementary solution for the generated problem.
\end{proof}
\begin{corollary}
For any SOCOP generated by Algorithm~\ref{alg: SOCO-MC}, the optimal partition $(\Bcal,\Ncal,\Rcal,\Tcal_1,\Tcal_2,\Tcal_3)$ is equal to $(B,N,R,T_1,T_2,T_3)$.
\end{corollary}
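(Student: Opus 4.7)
The plan is to derive this corollary as a direct consequence of the immediately preceding theorem, which already established that $(x^*, y^*, s^*)$ constructed by Algorithm~\ref{alg: SOCO-MC} is maximally complementary, i.e., $x^* \in \ri(\Pcal^*_{SOCO})$ and $(y^*, s^*) \in \ri(\Dcal^*_{SOCO})$. Given this, all that remains is a careful matching of the definition of the optimal partition $(\Bcal, \Ncal, \Rcal, \Tcal_1, \Tcal_2, \Tcal_3)$ against the way $(B, N, R, T_1, T_2, T_3)$ is used in Steps 4--9 of the algorithm to build $(x^*, s^*)$.

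First I would handle $\Bcal$ and $\Ncal$ via a standard relative-interior argument. If $i \in \Bcal$, then there exists some $\bar x \in \Pcal^*_{SOCO}$ with $\bar x_1^i > \|\bar x_{2:n_i}^i\|$; taking a convex combination of $\bar x$ with $x^*$, which sits in the relative interior of $\Pcal^*_{SOCO}$, forces $x^{*i}_1 > \|x^{*i}_{2:n_i}\|$ as well, and by the construction this happens exactly when $i \in B$. Conversely, $i \in B$ produces the witness $x^*$ itself, so $i \in \Bcal$. The symmetric argument with $s^*$ and $\Dcal^*_{SOCO}$ yields $\Ncal = N$.

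Next, for $\Rcal$, I would use the same convex-combination trick on both primal and dual simultaneously: an index $i \in \Rcal$ requires some $x \in \Pcal^*_{SOCO}$ and $(y,s) \in \Dcal^*_{SOCO}$ each having the corresponding component on the boundary of the cone with nonzero norm; by maximal complementarity, $x^{*i}$ and $s^{*i}$ must themselves share these properties, which by Step 9 occurs iff $i \in R$. The three $\Tcal$-classes are then pinned down by elimination: for $i \in T_1$, Step 6 gives $x^{*i} = s^{*i} = 0$, and because $x^*, s^*$ are relative-interior optima, no other feasible optimum can make either component nonzero in cone $i$, giving $\Tcal_1 = T_1$; Steps 7 and 8 likewise force the one-sided boundary behavior that defines $\Tcal_2$ and $\Tcal_3$, so these match $T_2$ and $T_3$.

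The main obstacle — and really the only nontrivial content — is the relative-interior implication that forced every "witness" property of $\Pcal^*_{SOCO}$ or $\Dcal^*_{SOCO}$ to already be visible on $(x^*, s^*)$; but this is precisely what the preceding theorem delivers. Consequently the corollary reduces to bookkeeping against the six cases of Steps 4--9, and I would present it as a one-paragraph argument invoking maximal complementarity and then running through the six partition classes in sequence.
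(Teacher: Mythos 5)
Your proposal is correct and matches the paper's (implicit) route: the paper states this corollary without proof as an immediate consequence of the preceding theorem, and your argument simply spells out the standard fact that a maximally complementary solution determines the optimal partition, via the relative-interior/convex-combination bookkeeping over the six classes of Steps 4--9. No gaps; this is exactly the reasoning the authors intend the reader to supply.
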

As expected, generating SOCOPs with predefined optimal partitions restricts on how matrix $A$ is generated. However, some components of $A$ are not restricted, and enable the user to control the properties of $A$. This is discussed next.
\subsubsection{SOCOPs with Optimal and Interior Solutions}\label{sec: SOCOPINTOPT}
We can extend Algorithm~\ref{alg: SOCO-OP} to provide both specific interior and optimal solutions by adding one row and column to the matrix $A$. We aim to generate optimal and interior solutions in a general manner, but we need to enforce the orthogonality condition:
\begin{equation}\label{con3}
    (x^0-x^*)^{\top} (s^0-s^*)=0.
\end{equation}
Note that this is a natural requirement; similar to LOPs, we have $(x^*-x^0)\in \text{Lin}^{\perp}(A)$ and $(s^*-s^0)\in \text{Lin}(A)$. 
\begin{algorithm}[H] 

\caption{Generating SOCOPs with specific interior and optimal solutions} \label{alg: SOCO-IP-OP}
\begin{algorithmic}[1]
\State Choose dimensions $m<n$ (the dimension of generated SOCOP: $m+1,n+1$)
\State Generate $(\Ahat,\bhat,\chat)$ with optimal solution $(\xhat,\yhat,\shat)$ by Algorithm~\ref{alg: SOCO-OP} (dimension: $p\times m$)

\State Generate $x^0\in\Lmbb_{+}^{n_1,\dots,n_p+1}$ and let $\xhat^0=((x^0)^1;\dots;(x^0)^p_{1:n_p}) $
\State Build $x^*\in\Lmbb_{+}^{n_1,\dots,n_p+1}$ such that $$(x^*)^{1:p-1}=(\xhat)^{1:p-1},\ (x^*)^{p}_{1:n_p}=(\xhat)^{p}_{1:n_p}\text{, and }(x^*)^{p}_{n_p+1}=0$$  
\State Calculate vector $\alpha=\frac{1}{(x^0)^{p}_{n_p+1}}\Ahat(\xhat-\xhat^0)$
\State Build $\Atilde=(\Ahat,\alpha)$ (concatenation of a column to matrix) \label{sconcate}

\State Generate $y^0 \in \mathbb{R}^{m+1}$ such $y^0_{m+1}\not =0$, and build $y^*=(\yhat,0)^{\top} \in \mathbb{R}^{m+1}$
\State Generate $\shat^0\in\Lmbb_{+}^{n_1,\dots,n_p}$ 
\State Let $\delta=(\xhat^0-\xhat)^{\top} (\shat^0-\shat)$, and generate $(s^0)^p_{n_p+1}>(\frac{-\delta}{(x^0)^p_{n_p+1}})^+$ \label{a13s10}
\State Calculate $\hat{s}^p_{n_p+1}=\frac{\delta}{(x^0)^p_{n_p+1}}+(s^0)^p_{n_p+1}$
\State Build $s^*=(\shat,\hat{s}^p_{n_p+1})^{\top}$, $s^0=(\shat^0,(s^0)^p_{n_p+1})^{\top}$; \label{a13s9}
\State Calculate vector $\beta=\frac{1}{y^0_{m+1}}(\Atilde^{\top} (\yhat-(y^0)_{1:m})+s^*-s^0)$
\State Build $A=\begin{pmatrix}\Atilde\\\beta^{\top} \end{pmatrix}$ (concatenation of a row to matrix)
\State Calculate $b=Ax^*$ and $c=A^{\top}y^*+s^*$
\State \textbf{Return }{SOCOP $(A, b, c)$ with interior solution
$(x^0,y^0,s^0)$ and optimal solution $(x^*,y^*,s^*)$}
\end{algorithmic}
\end{algorithm}
Theorem \ref{t3} shows that the claimed properties of  $(x^0,y^0,s^0)$ and  $(x^*,y^*,s^*)$ are indeed correct. Before presenting  Theorem~\ref{t3}, we need to verify the orthogonality properties of the generated solution.
\begin{lemma}\label{lem: orthogonality of SOCO}
For any $(x^0,y^0,s^0)$ and  $(x^*,y^*,s^*)$ generated by Algorithm \ref{alg: SOCO-IP-OP}, we have
$$(x^0-x^*)^{\top} (s^0-s^*)=0.$$
\end{lemma}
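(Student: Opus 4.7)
The plan is to mimic the argument used for Lemma~\ref{lem: orthogonality of LO} and Lemma~\ref{lem: orthogonality of SDO}: split the inner product $(x^0-x^*)^{\top}(s^0-s^*)$ into the block corresponding to the first $n$ coordinates and the single newly introduced coordinate (index $n_p+1$ within the $p$-th cone), and then substitute the explicit formulas for $\hat{s}^p_{n_p+1}$ from Step~\ref{a13s9} and $\delta$ from Step~\ref{a13s10}.

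First I would read off from Steps~4, 11 and~12 of Algorithm~\ref{alg: SOCO-IP-OP} that, coordinate-wise, the first $n$ entries of $x^*$ are $\hat{x}$ and its last entry is $0$, the first $n$ entries of $s^*$ are $\hat{s}$ and its last entry is $\hat{s}^p_{n_p+1}$, while the first $n$ entries of $x^0$ are $\hat{x}^0$ with last entry $(x^0)^p_{n_p+1}$, and the first $n$ entries of $s^0$ are $\hat{s}^0$ with last entry $(s^0)^p_{n_p+1}$. This immediately gives
\[
(x^0-x^*)^{\top}(s^0-s^*) = (\hat{x}^0-\hat{x})^{\top}(\hat{s}^0-\hat{s}) + (x^0)^p_{n_p+1}\bigl((s^0)^p_{n_p+1} - \hat{s}^p_{n_p+1}\bigr).
\]
By the definition of $\delta$ in Step~\ref{a13s10}, the first summand equals $\delta$.

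Next, I would substitute the formula $\hat{s}^p_{n_p+1} = \delta/(x^0)^p_{n_p+1} + (s^0)^p_{n_p+1}$ from Step~10 into the second summand, so that $(s^0)^p_{n_p+1} - \hat{s}^p_{n_p+1} = -\delta/(x^0)^p_{n_p+1}$, and the second summand becomes $-\delta$. Adding the two summands gives $\delta + (-\delta) = 0$, which concludes the proof. (The positivity condition $(s^0)^p_{n_p+1} > (-\delta/(x^0)^p_{n_p+1})^+$ is only needed to guarantee $\hat{s}^p_{n_p+1}>0$, i.e.\ feasibility, and plays no role in the orthogonality identity itself.)

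There is no genuine obstacle here; the step that requires a little care is merely bookkeeping the block decomposition between the ``hat'' variables of dimension $n$ and the single appended coordinate of the enlarged $p$-th cone, so as to correctly identify which cross-terms contribute to $\delta$. Once that bookkeeping is done, the identity collapses to the same one-line cancellation used in the LO and SDO analogues.
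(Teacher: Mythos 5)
Your proposal is correct and follows the same route as the paper, which simply notes that Steps~\ref{a13s10} and \ref{a13s9} of Algorithm~\ref{alg: SOCO-IP-OP} enforce the orthogonality condition in the same way as in the LO case; you have just written out the block decomposition and the one-line cancellation $\delta + (x^0)^p_{n_p+1}\bigl(-\delta/(x^0)^p_{n_p+1}\bigr)=0$ explicitly. Your remark that the positivity bound on $(s^0)^p_{n_p+1}$ is only needed for feasibility, not for the identity, is also accurate.
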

\begin{proof}
Similar to the proof of Lemma \ref{lem: orthogonality of LO}, Steps \ref{a13s10} and \ref{a13s9} of Algorithm~\ref{alg: SOCO-IP-OP}  ensure that the orthogonality condition holds.
\end{proof}
Using Lemma~\ref{lem: orthogonality of SOCO}, the following theorem shows that the generated problem satisfies the desired properties.
\begin{theorem}\label{t3}
Let $(x^0,y^0,s^0)$ and  $(x^*,y^*,s^*)$ be generated by Algorithm~\ref{alg: SOCO-IP-OP}. Then, 
\begin{subequations}
\begin{align*}
    x^* \circ s^*&=0,\\
    Ax^*&=b,\\
    A^{\top} y^*+s^*&=c,\\
    Ax^0&=b,\\
    A^{\top} y^0+s^0&=c,\\
x^*\in \Lmbb ^{n+1},~s^*\in \Lmbb^{n+1},~x^0\in \Lmbb^{n+1}_+,~s^0&\in \Lmbb^{n+1}_+.
\end{align*}
\end{subequations}
That is, $(x^0,y^0,s^0)$ and  $(x^*,y^*,s^*)$ are interior and optimal solutions, respectively, for the generated SOCOP $(A,b,c)$.
\end{theorem}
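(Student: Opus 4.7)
}
The plan mirrors the structure of the proof of Theorem~\ref{t1}, since Algorithm~\ref{alg: SOCO-IP-OP} is essentially the SOCO analogue of Algorithm~\ref{alg: LO-OP-IP}: it first invokes the optimal-solution generator, then appends a carefully chosen column $\alpha$ and row $\beta^{\top}$ to the matrix $\hat{A}$. Several of the claimed properties are immediate from the steps of the algorithm, and the only nontrivial work is in verifying the primal and dual feasibility of the interior pair $(x^0,y^0,s^0)$, where Lemma~\ref{lem: orthogonality of SOCO} is the crucial ingredient.

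First, the cone memberships $x^*\in\Lmbb^{n+1}$, $s^*\in\Lmbb^{n+1}$, $x^0\in\Lmbb^{n+1}_+$, and $s^0\in\Lmbb^{n+1}_+$ are enforced by Steps 3, 4, 8, 10, and 11 of the algorithm. Complementarity $x^*\circ s^*=0$ holds because $(\hat{x},\hat{y},\hat{s})$ satisfies $\hat{x}\circ\hat{s}=0$ by construction of Algorithm~\ref{alg: SOCO-OP}, and the appended component of $x^*$ is zero, so the Jordan product in the extended coordinate vanishes. Primal and dual feasibility of $(x^*,y^*,s^*)$, namely $Ax^*=b$ and $A^{\top}y^*+s^*=c$, are immediate from the way $b$ and $c$ are defined in Step 14.

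The work then reduces to showing $Ax^0=b$ and $A^{\top}y^0+s^0=c$, which I would rewrite as $A(x^0-x^*)=0$ and $A^{\top}(y^0-y^*)=s^*-s^0$. For the former, I split $A$ into its first $m$ rows $\tilde{A}=(\hat{A},\alpha)$ and its last row $\beta^{\top}$. For the top block, $\tilde{A}(x^0-x^*)=\hat{A}(\hat{x}^0-\hat{x})+\alpha\,(x^0)^p_{n_p+1}$, which vanishes by the definition of $\alpha$ in Step 5. For the last row, expanding $\beta^{\top}(x^0-x^*)$ using the definition of $\beta$ yields two terms: the first is proportional to $(\hat{y}-(y^0)_{1:m})^{\top}\tilde{A}(x^0-x^*)$ and is zero by the previous step, while the second is proportional to $(s^*-s^0)^{\top}(x^0-x^*)$ and is zero by Lemma~\ref{lem: orthogonality of SOCO}. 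Hence $Ax^0=Ax^*=b$. For the dual equation, $A^{\top}(y^0-y^*)=\tilde{A}^{\top}((y^0)_{1:m}-\hat{y})+\beta\,y^0_{m+1}$, and substituting the definition of $\beta$ from Step 12 collapses the expression to $s^*-s^0$ directly.

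The main obstacle, as in Theorem~\ref{t1}, is the last row of $A$: ensuring $\beta^{\top}(x^0-x^*)=0$ requires precisely the orthogonality condition $(x^0-x^*)^{\top}(s^0-s^*)=0$, which is why Lemma~\ref{lem: orthogonality of SOCO} is stated separately and why the construction of $\delta$ and the scalar $\hat{s}^p_{n_p+1}$ in Steps 9--10 is tailored to enforce that identity. Once those two displayed equalities are in hand, the theorem statement follows.
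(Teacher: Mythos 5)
Your proposal follows exactly the route the paper intends: the paper's own proof of Theorem~\ref{t3} is literally the single sentence ``The proof is similar to the proof of Theorem~\ref{t1},'' and your write-up is a faithful (and considerably more explicit) expansion of that argument. Your feasibility computations are correct and are where all the content lies: $\tilde{A}(x^0-x^*)=0$ by the definition of $\alpha$, the last row annihilates $x^0-x^*$ by splitting $\beta^{\top}(x^0-x^*)$ into the $\tilde{A}$-term and the $(s^*-s^0)^{\top}(x^0-x^*)$ term killed by Lemma~\ref{lem: orthogonality of SOCO}, and $A^{\top}(y^0-y^*)=s^*-s^0$ collapses by substituting $\beta$.

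One step is too quick, though the paper glosses over it as well: your justification of $x^*\circ s^*=0$. In the extended last cone, the new coordinate of the vector part of $(x^*)^p\circ(s^*)^p$ equals $(x^*)^p_1\,\hat{s}^p_{n_p+1}+(s^*)^p_1\cdot 0$; the appended zero in $x^*$ kills only the second term, while the first term $\hat{x}^p_1\,\hat{s}^p_{n_p+1}$ need not vanish unless $\hat{x}^p_1=0$ (i.e., the last cone lies in $N$, $T_1$, or $T_3$) or $\hat{s}^p_{n_p+1}=0$. A similar check is needed for the claim $s^*\in\Lmbb^{n+1}$, since appending a nonzero coordinate $\hat{s}^p_{n_p+1}$ to $\hat{s}^p$ enlarges $\|(s^*)^p_{2:n_p+1}\|$ without changing $(s^*)^p_1$. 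This is really a restriction the algorithm must impose on the placement of the last cone (the paper itself hints at this in Section~\ref{sec: SOCOPINTMC}), so you should either add that hypothesis or verify the extended coordinate explicitly rather than asserting it vanishes.
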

\begin{proof}
The proof is similar to the proof of Theorem~\ref{t1}.
\end{proof}
Compared to the SDOP generators, the SOCOP generators are computationally simpler since they do not require generating random orthonormal or positive semidefinite matrices. Let $t_R$ be the amount of arithmetic operations required to generate a number randomly. To generate orthonormal or positive semidefinite matrices, we need to use a decomposition method, which requires $\Ocal(n^3)$ arithmetic operations as discussed in the appendix. In the general case, the LOP and SOCOP  generators require $\Ocal(n^2t_r)$ arithmetic operations, while the SDO generators require $\Ocal(n^3t_r)$ arithmetic operations. It should be mentioned that if we want to generate a random matrix $A$ in LOPs and SOCOPs with specific condition numbers, then we need to use decomposition methods and the complexity of the LOP and SOCOP generators increases to $\Ocal(n^3t_r)$ arithmetic operations.
\subsubsection{SOCOPs with Predefined Interior and Maximally Complementary Solutions}\label{sec: SOCOPINTMC}
To generate a SOCOPs with both interior and maximally complementary solutions, we can use Algorithm~\ref{alg: SOCO-IP-OP} and in its first step, use Algorithm~\ref{alg: SOCO-MC} which provide a SOCOP with maximally complementary solution. The only difference is that we should choose the partition such that the last cone is in the partition $N$. By this modification, the added column in Step~\ref{sconcate} of Algorithm~\ref{alg: SOCO-IP-OP}   will be in partition $N$, which satisfies all the restrictions needed to keep $(x^*,y^*,s^*)$ maximally complementary. In this way, we can generate a SOCOP with interior solution and predetermined optimal partition.
\section{Implementation}\label{sec: imp}
All mentioned generators are implemented in a \texttt{python} package, which is available in open source at
\url{https://github.com/qcol-lu/qipm}. 
This package gives the option of prescribing the norm of vectors, condition numbers, and sparsity of the matrices. In addition, several versions of interior point methods, such as feasible/infeasible, exact/inexact, and  long-step/short-step/predictor-corrector, are implemented and available for the experiment. There is also an option to choose the solver of the Newton system. One may choose classical or quantum linear system algorithms.

\section{Conclusion} \label{sec: con}
We develop and implement several random instance generators for LO, SDO, and SOCO with specific optimal and/or interior solutions. Because of high level of controllability, these generators enable users to analyze different features of the problem, such as sparsity and condition number, to study the performance of different algorithms smartly. In addition, we proposed SDOP and SOCOP generators with predefined optimal partition, which can be used to generate computationally challenging instances. 

The proposed generators can also be used to study the average performance of algorithms for solving LO, SDO, and SOCO problems with different probability distributions for input data, optimal and interior solutions. Future research directions include expanding the construction of these generators for other classes of conic, polynomial, and nonlinear optimization problems.

A useful direction for extending the proposed generators is to generate hard problems which are challenging for algorithms and solvers, e.g., LOPs which are primal or dual unbounded. For SDO and SOCO, it is worth exploring to develop generators which produce instances that have zero-duality gap, but with an optimal solution that is not attainable or instances with non-zero duality gap.
\section{Acknowledgement}
This work is supported by Defense Advanced Research Projects Agency as part of the project W911NF2010022: {\em The Quantum
Computing Revolution and Optimization: Challenges and Opportunities}.

\bibliographystyle{dependencies/tfs}
\bibliography{dependencies/interacttfssample}

\section{Appendices}
\appendix
Here, we review some basic procedures to generate random positive semidefinite matrices and orthogonal matrices, which can be used in the proposed SDOP and SOCOP generators. 
\section{Generating Random Positive Semidefinite Matrices}\label{appendix:PSD}
There are several approaches to generating a positive semidefinite matrix $P\in \mathcal{S}_+^n$.
\begin{enumerate}

    \item Generate a random matrix $A\in \mathbb{R}^{n\times n}$ and calculate the target matrix $P=AA^{\top}$. If $A$ has full rank with probability 1, the matrix $P$ is positive semidefinite with probability 1. 
    \item A more efficient way is to generate a lower triangular random matrix $L\in \mathbb{R}^{n\times n}$ and calculate the target matrix $P=LL^{\top}$. If the diagonal elements of $L$ are non-zero, then $P$ is positive definite. If some of the diagonal elements of $L$ are zero, then $P$ is positive semidefinite.
    \item Generate an orthonormal matrix $Q\in \mathbb{R}^{n\times n}$ and a positive diagonal matrix $\Lambda$. Then calculate $P=Q\Lambda Q^{\top}$. If the diagonal elements of $\Lambda$ are greater than zero, then $P$ is positive definite. If diagonal elements of $\Lambda$ are greater than or equal to zero, then $P$ is positive semidefinite.
    \item Since generating a random orthonormal matrix is computationally expensive, we can generate a lower triangular random matrix $L\in \mathbb{R}^{n\times n}$ in which all diagonal elements are one instead. Then we can compute the target matrix as $P=LD L^{\top}$ where $D$ is a diagonal matrix with non-negative elements.
\end{enumerate}
The second one requires the fewest arithmetic operations among the four mentioned approaches. However, the third one gives the option of determining the range of eigenvalues of the matrix $P$, which is helpful in predetermining the condition number of matrix $P$.
\section{Generating Random Orthogonal Matrices}\label{appendix:ortho}
A general approach is to generate a random matrix $A\in \mathbb{R}^{n\times n}$ and orthogonalize it by the Gram–Schmidt process or other methods in QR decomposition, such as Modified Gram–Schmidt and Householder methods. Generating random orthogonal (or unitary) matrices is an active research area and there are many efficient procedures to generate such matrices, e.g., see \citep{mezzadri2006generate}.

\end{document}